%
%
%
%
%
%
%
%
%
\documentclass[smallextended,numbook,runningheads]{svjour3}     
\smartqed  
\usepackage{graphicx}
\usepackage{amsmath}
\usepackage{epstopdf} 
\usepackage{mathptmx}      

\usepackage{epsfig,bm,epstopdf,graphicx}
\usepackage{amssymb,epsfig,mathrsfs,mathpazo}
\usepackage[multiple]{footmisc}
\usepackage[font=footnotesize,labelfont=bf]{caption}
\usepackage{color}

\newcommand{\be}{\begin{equation}}
\newcommand{\ee}{\end{equation}}

\newcommand{\R}{\mathbb R}
\newcommand{\C}{\mathbb C}

\newcommand{\N}{\mathbb N}

\newcommand{\cL}{\mathcal L}

\newcommand{\Lis}{\cL_{\mathrm{iso}}}
\newcommand{\identity}{\mathrm{Id}}

\newcommand{\res}[1]{{\color{black}{#1}}}
\newcommand{\revision}[1]{{\color{black}{#1}}}
\newcommand{\revv}[1]{{\color{black}{#1}}}

\DeclareMathOperator{\argmin}{argmin}
\DeclareMathOperator{\Erfc}{Erfc}

%
%
%
\journalname{BIT}
\begin{document}

\title{Efficient numerical approximation of a non-regular Fokker--Planck equation associated with first-passage time distributions\thanks{GG was supported by the Austrian Science Fund (FWF) under grant J4379-N.}}

\titlerunning{Numerical approximation of a Fokker--Planck equation}        

\author{Udo Boehm \and Sonja Cox \and Gregor Gantner \and Rob Stevenson} 

\authorrunning{U.\ Boehm \and S.\ Cox \and G. Gantner \and R. Stevenson} 

\institute{U.\ Boehm \at Department of Psychology\\ 
University of Amsterdam\\  
PO Box 15906\\
1001 NK Amsterdam\\ 
The Netherlands\\
\email{u.bohm@uva.nl}
\and 
S.\ Cox, G.\ Gantner, R.\ Stevenson \at 
Korteweg--de Vries (KdV) Institute for Mathematics\\
University of Amsterdam\\
PO Box 94248\\
1090 GE Amsterdam\\
The Netherlands\\
\email{s.g.cox@uva.nl; g.gantner@uva.nl; r.p.stevenson@uva.nl}
}

\date{Received: date / Accepted: date}

\maketitle

\begin{abstract}

In neuroscience, the distribution of a decision time is modelled by means of a one-dimensional Fokker--Planck equation with time-dependent boundaries and space-time-dependent drift. 
Efficient approximation of the solution to this equation is required, e.g., for model evaluation and parameter fitting. However, the prescribed boundary conditions lead to a strong singularity and thus to slow convergence of numerical approximations. In this article we demonstrate that the solution can be related to the solution of a parabolic PDE on a rectangular space-time domain with homogeneous initial and boundary conditions by transformation and subtraction of a known function. We verify that the solution of the new PDE is indeed more regular than the solution of the original PDE and proceed to discretize the new PDE using a space-time minimal residual method. We also demonstrate that the solution depends analytically on the parameters determining the boundaries as well as the drift. This justifies the use of a sparse tensor product interpolation method to approximate the PDE solution for various parameter ranges. The predicted convergence rates of the minimal residual method and that of the interpolation method are supported by numerical simulations.

\keywords{Fokker--Planck equation, time-dependent spatial domain, space-time variational formulation, parameter dependent PDE, sparse tensor product interpolation}


\subclass{%
30B40, 
35A15, 
35B65, 
35K08, 
60H30, 
65D05, 
65M12
}

\end{abstract}

\section{Introduction}\label{sec:introduction}
In 1978 Ratcliff~\cite{Ratcliff1978} introduced a model for binary decision processes based on diffusion processes. This model turned out to agree well with experimental data; Gold and Shadlen~\cite{Gold2001} provides a neurophysiological explanation for its success. Indeed, the solution $(X_t)_{t\geq 0}$ of a one-dimensional stochastic differential equation is assumed to describe the difference in activity of two competing neuron populations. At time $t = 0$, the value $X_0 = x_0\in \R$ represents the resting-state activity of the neuron populations. A decision is triggered when $(X_t)_{t\geq 0}$ first reaches one of two (possibly time-dependent) critical values $\alpha$ or $\beta$, each reflecting an outcome of the decision process.\par 

In a typical decision experiment, scientists can only measure the decision time and outcome. Parameter fitting thus requires access to the decision time distributions, which are rarely known explicitely. 
Ad hoc numerical simulations are costly whence efficient simulation methods are much sought-after~\cite{Hawkins2015,Fengler2020}.\par  

\res{In this article we} extend and improve a simulation method introduced in~\cite{Voss2008}, which is based on the Fokker--Planck equation associated to the decision time.  
\res{In particular, this article may be viewed as the theoretical counterpart of our publication~\cite{BCGS21}, which is aimed at the neuroscientific community.\par Linking the first hitting time of a stochastic differential equation to a Fokker--Planck \revision{equation} is a well-known approach that has also been applied in e.g.\ astrophysics~\cite{Chandrasekhar:1943} and cell biology~\cite{HolcmanSchuss:2015}; for an overview see~\cite{ArtimeEtAl:2018}. In particular, although we only consider examples arising from neuroscience, the simulation method we introduce is also relevant for other applications.\par} 
To explain the \res{Fokker--Planck based} approach consider the following stochastic differential equation:
\begin{equation}
 dX_t^{y} = \mu(t,X_t^{y}) \,dt + \sigma \,dW_t \quad t\in [0,\infty),\, X_0^{y}=y.
\end{equation}
Here $(W_t)_{t\in [0,\infty)}$ is a Brownian motion, $\sigma\in (0,\infty)$ is the diffusion parameter, $\mu \in C([0,\infty)\times \R)$ is the (time- and state-dependent) drift and $y\in \R$ is the initial value. Let $\alpha,\beta\in C^1([0,\infty))$ satisfy $\alpha\leq\beta$, and for all $y\in [\alpha(0),\beta(0)]$ define the stopping times $\hat{\alpha}_{y}, \hat{\beta}_{y}$ by
\begin{equation}\begin{aligned}
 \hat{\alpha}_{y} &:= \inf\{t \in [0,\infty)\colon X_t^{y} \leq \alpha(t) \},
 \\
 \hat{\beta}_{y} &:= \inf\{t \in [0,\infty)\colon X_t^{y} \geq \beta(t) \}. 
 \end{aligned}
\end{equation}
The quantities of interest in neurophysiological decision models are the \emph{first hitting time probabilities}: $\mathbb{P}[ \hat{\alpha}_{y} \leq \min(\tau,\hat{\beta}_{y}) ]$, where $\tau \in (0,\infty)$ and $y\in [\alpha(0),\beta(0)]$. These probabilities can be linked to the solution of a parabolic PDE. Indeed, assume $\alpha<\beta$ on $[0,\tau]$ for some $\tau\in (0,\infty)$, set $Q := \{(t,x)\in (0,\tau)\times\R \colon \alpha(\tau-t) < x < \beta(\tau-t))\}$, and consider the following PDE:
\begin{equation}\label{eq:FPintro}
 \left\{
\begin{alignedat}{2}
\partial_{t} F(t,x) &= \tfrac{\res{\sigma^2}}{2}\partial^2_{x} F(t,x) + \mu(\tau-t,x)  \partial_{x} F(t,x) && \quad (t,x)\in Q_{\tau},\\
F(t,\alpha(\tau-t))&=1, \quad F(t,\beta(\tau-t))=0 &&\quad t \in (0,\tau),\\
F(0, x)&=0&&\quad   x \in (\alpha(\tau), \beta(\tau)).
\end{alignedat}
\right.
\end{equation} 
Under some additional regularity assumptions on $\alpha$, $\beta$, and $\mu$ it can be shown that a solution to~\eqref{eq:FPintro} exists and satisfies 
\begin{equation}\label{eq:relFP_fhtp}\mathbb{P}[ \hat{\alpha}_{y} \leq \min(\tau,\hat{\beta}_{y}) ]  = F(\tau,y),\quad \alpha(0)\leq y \leq \beta(0).
\end{equation}
(see~\cite[Appendix A]{Voss2008} for the case that $\alpha$ and $\beta$ are constant and $\mu$ does not depend on time or~\cite[Chapter 7]{Oksendal1998} for general Fokker--Planck equations, also known \res{in this setting as a backward Kolmogorov equation}).\par 
In~\cite{Voss2008}, a Crank--Nicolson method is used to approximate solutions to~\eqref{eq:FPintro} in the case that $\alpha$, $\beta$, and $\mu$ are constant. One advantage of this setting is that one only needs to solve a single PDE of type~\eqref{eq:FPintro} in order to obtain
the first hitting time probabilities $\mathbb{P}[ \hat{\alpha}_{y} \leq \min(t,\hat{\beta}_{y}) ]$ for all $t\in [0,\tau],\, y\in [\alpha(0),\beta(0)]$. However, due to the fact that  $F$ is discontinuous at $(t,x)=(0,\alpha(\tau))$, no proof of convergence of the Crank--Nicolson for decreasing step-sizes seems available. At best, reduced rates are to be expected. Moreover, various authors have argued that time-dependent boundaries $\alpha$ and $\beta$ and space-time-dependent drift $\mu$ provide a more realistic model for decision processes, for an overview see~\cite{Hawkins2015,Shadlen2013}.

In this article we \emph{extend}~\cite{Voss2008} to include diffusion models with time-dependent boundaries and non-constant drift. We \emph{improve} the efficiency of the numerical simulation by not approximating the solution $F$ to~\eqref{eq:FPintro} directly, instead, we approximate the solution to a parabolic PDE on a rectangular domain with homogeneous initial and boundary conditions constructed such that its difference with $F$ (transformed to the same rectangular domain)  is a function for which a rapidly converging series expansion is known. 

More specifically, in Section~\ref{sec:FP} we demonstrate that if $\alpha,\beta$ are once continuously differentiable, then~\eqref{eq:FPintro} can be transformed into a parabolic PDE on a rectangular domain with a space-time-dependent drift. Next, in Section~\ref{sec:reganalysis} we demonstrate that by subtracting a known, discontinuous function, we obtain a parabolic PDE with homogeneous boundary conditions, see~\eqref{R3} below. 
We analyze the regularity of the solution $e$ to this equation and verify that it is indeed smoother than $F$, see Corollary~\ref{corol1} and Theorem~\ref{smoothness}.

In Section~\ref{SMRM} we apply a minimal residual method~\cite{11,249.99,249.992} to approximate the solution $e$ to~\eqref{R3}. 
This method is known to give quasi-best approximations from the selected trial space in the norm on a natural solution space being the intersection of two Bochner spaces.
Taking as trial space the space of continuous piecewise bilinears with respect to a uniform partition of the space-time cylinder into rectangles with mesh width $h$, in Theorem~\ref{thm:optimal convergence} the optimal error bound of order $h$ is shown for the solution~$e$ to~\eqref{R3}. 

In Section~\ref{sec:parameter dependence} we consider the situation that $\mu$, $\alpha$, and $\beta$ can be parametrized analytically and verify that in this case the corresponding solution $e$ to ~\eqref{R3} (transformed onto the unit square) depends analytically on these parameters as well as on the final time $\tau$, see Theorem~\ref{thm:analytic_in_para}. This justifies the use of a sparse tensor-product interpolation~\cite{239.18} to determine the solution $e$ to ~\eqref{R3} efficiently for multiple end-time and parameter values. Finally, in Section~\ref{sec:examples} we provide numerical simulations for three different decision models taken from the neurophysiological literature. \par 
\res{In our parallel publication~\cite{BCGS21} mentioned above, we provide further numerical experiments and code. There,} we apply the Crank--Nicolson method \res{(without giving any error analysis)} to approximate the solution $e$ to~\eqref{R3}. 
In the examples we consider it appears that the Crank--Nicolson method leads to similar convergence as the minimal residual method. Although \res{we only provide a rigorous error analysis for the minimal residual method}, Crank--Nicolson may be preferred in practice as it is easier to implement. We refer to~\cite{BCGS21} for further details.


\subsection{Notation}\label{ssec:1.2}
In this work, by $C \lesssim D$ we mean that $C$ can be bounded by a multiple of $D$, independently of parameters which $C$ and $D$ may depend on.
Obviously, $C \gtrsim D$ is defined as $D \lesssim C$, and $C\eqsim D$ as $C\lesssim D$ and $C \gtrsim D$.

For normed linear spaces $E$ and $F$, by $\cL(E,F)$ we denote the normed linear space of bounded linear mappings $E \rightarrow F$,
and by $\Lis(E,F)$ its subset of boundedly invertible linear mappings $E \rightarrow F$.

\section{Transforming the Fokker--Planck equation to a rectangular space-time domain} \label{sec:FP}
In this section we demonstrate that~\eqref{eq:FPintro} can be transformed into a PDE 
on a rectangular space-time domain, see~\eqref{R1} below. The PDE in~\eqref{R1} below forms the starting point for the remainder of this article, which is why 
we use tildes in~\eqref{R0} below to distinguish the variables and coefficients of the non-transformed equation from those in~\eqref{R1}. Indeed, let $\widetilde{T} \in (0,\infty]$, assume $a,b \in C^{1}([0,\widetilde{T}))$ satisfy $a(\tilde{t})< b(\tilde{t})$ for all $\tilde{t}\in [0,\widetilde{T})$,
set $\widetilde Q := \{ (\tilde{t},\tilde{x})\in (0,\widetilde{T})\times \R \colon a(\tilde{t}) < \tilde{x} < b(\tilde{t})\}$, let $\tilde{v} \in L_\infty(\widetilde Q)$, and consider the following parabolic initial- and boundary value problem:
\begin{equation}\label{R0}
\left\{
\begin{alignedat}{2}
\partial_{\tilde{t}} \tilde{u}(\tilde{t}, \tilde{x})&= \partial^2_{\tilde{x}} \tilde{u}(\tilde{t}, \tilde{x}) +\tilde{v}(\tilde{t}, \tilde{x}) \partial_{\tilde{x}} \tilde{u}(\tilde{t}, \tilde{x})&& \quad(\tilde{t}, \tilde{x}) \in \widetilde Q,\\
\tilde{u}(\tilde{t},a(\tilde{t}))&=1, \quad \tilde{u}(\tilde{t},b(\tilde{t}))=0 &&\quad \tilde{t} \in (0,\widetilde{T}),\\
\tilde{u}(0, \tilde{x})&=0&&\quad   \tilde{x} \in (a(0), b(0)).
\end{alignedat}
\right.
\end{equation}
Note that this is~\eqref{eq:FPintro} with $\tilde{u}(\tilde{t},\tilde{x}) = F(\frac{2\tilde{t}}{\res{\res{\sigma^\revv{2}}}},\tilde{x})$, $\widetilde{T}=\frac{\res{\sigma^2} \tau}{2}$, $a(\tilde{t})=\alpha(\frac{2}{\res{\sigma^2}}(\widetilde{T}-\tilde{t}))$, $b(\tilde{t})=\beta(\frac{2}{\res{\sigma^2}}(\widetilde{T}-\tilde{t}))$, $\tilde{v}(\tilde{t},\tilde{x})=\revv{\frac{2}{\sigma^2}}\mu(\frac{2}{\res{\sigma^2}}(\widetilde{T}-\tilde{t}),\tilde x )$.
%
%

Now, set $T:=\int_{0}^{\tilde{T}} |b(\tilde{s})-a(\tilde{s})|^{-2} \,d\tilde{s}$ \res{(where possibly $T=\infty$)} and define $\theta \colon [0,T) \rightarrow [0,\tilde{T})$ by 
 $\theta(t) = \sup\left\{ \tilde{r}\in [0,\widetilde{T}) \colon \int_{0}^{\tilde{r}} |b(\tilde{s})-a(\tilde{s})|^{-2} \,d\tilde{s} \leq t \right\}$,
then $\theta$ is a bijection and $\theta^{-1}(\tilde{t}) = \int_{0}^{\tilde{t}} |b(\tilde{s})-a(\tilde{s})|^{-2} \,d\tilde{s}$. In particular, from $t=\theta^{-1}(\theta(t))$ we obtain that $\theta$ satisfies the following ODE
\begin{equation}\label{eq:ode_time}
 \theta'(t) = \big(b(\theta(t)) - a(\theta(t))\big)^2,\quad \theta(0)=0.
\end{equation}

With
$$
\Omega:=(0,1),
$$
and $\xi\colon [0,\widetilde{T})\times \overline{\Omega} \rightarrow \R$ defined by
$$
\xi(\tilde{t},x) := (1-x) a(\tilde{t})+x b(\tilde{t}),
$$
we have that
$$
[0,T) \times \Omega \rightarrow \widetilde Q\colon (t,x) \mapsto (\theta(t),\xi(\theta(t),x))
$$
is a bijection with inverse
$$
(\tilde{t},\tilde{x}) \mapsto 
\Big(
    \theta^{-1}(\tilde{t}), 
    \frac{\tilde{x}-a(\tilde{t})}
    {b(\tilde{t})-a(\tilde{t})}
\Big).
$$

Defining $u,v\colon [0,T)\times \overline{\Omega} \rightarrow \R$ by
\begin{align*}
u(t,x) &:= \tilde{u}(\theta(t),\xi(\theta(t),x)),\\
v(t,x) &:= (b(\theta(t))-a(\theta(t))) \big[\tilde{v}(\theta(t),\xi(\theta(t),x))+(1-x)a'(\theta(t)))+x b'(\theta(t))\big],
\end{align*}
we have $u(t,0)=1$, $u(t,1)=0$ ($t \in (0,T)$), and $u(0,x)=0$ ($x \in \Omega$). Moreover, for $(t,x) \in (0,T)\times \Omega$, one has
\begin{align*}
 \partial_{x} u(t,x) &= (b(\theta(t))-a(\theta(t))) \partial_{\tilde{x}} \tilde{u}(\theta(t),\xi(\theta(t),x)),\\
 \partial^2_{x} u(t,x) &= (b(\theta(t))-a(\theta(t)))^2 \partial^2_{\tilde{x}} \tilde{u}(\theta(t),\xi(\theta(t),x)),
\end{align*}
and 
\begin{align*}
 \partial_t u(t,x) &=
 \theta'(t)\big\{\partial_{\tilde{t}} \tilde{u}(\theta(t), \xi(\theta(t),x))+\partial_{\tilde{t}} \xi(\theta(t),x)\partial_{\tilde{x}} \tilde{u}(\theta(t),\xi(\theta(t),x))\big\}
\\ & = (b(\theta(t))-a(\theta(t)))^2 \Big\{\partial_{\tilde{t}} \tilde{u}(\theta(t), \xi(\theta(t),x))+\\
&\hspace*{6em} \big[(1-x)a'(\theta(t))+x b'(\theta(t))\big] \partial_{\tilde{x}} \tilde{u}(\theta(t),\xi(\theta(t),x))\Big\}\\
& = (b(\theta(t))-a(\theta(t)))^2 \Big\{\partial^2_{\tilde{x}} \tilde{u}(\theta(t), \xi(\theta(t),x))+\\
&\big[\tilde{v}(\theta(t), \xi(\theta(t),x))+(1-x)a'(\theta(t))+x b'(\theta(t))\big] \partial_{\tilde{x}} \tilde{u}(\theta(t),\xi(\theta(t),x))\Big\}\\
& =\partial^2_{x} u(t,x)+ v(t,x) \partial_{x} u(t,x).
\end{align*} 
In other words, with
$$
I:=(0,T),
$$
\eqref{R0} is equivalent to finding $u=u(v)$ that solves
\begin{equation}\label{R1}
\left\{
\begin{alignedat}{2}
\partial_{t} u(t, x)&= \partial^2_{x} u(t, x) +v(t, x) \partial_{x} u(t, x)&& \quad(t, x) \in I \times \Omega,\\
u(t,0)&=1, \quad u(t,1)=0 &&\quad t \in I,\\
u(0, x)&=0&&\quad   x \in \Omega.
\end{alignedat}
\right.
\end{equation}
For its numerical solution, we always consider system \eqref{R1} for $T <\infty$.


\begin{example}\label{ex:linear boundaries}
Bowman, Kording, and Gottfried~\cite{Bowman2012} suggested 
collapsing boundaries, i.e., in~\eqref{eq:FPintro} they 
take $\alpha(t):= \frac{\beta_0 t}{2T_0}$ and $\beta(t) := \beta_0(1-\frac{t}{2T_0})$ for 
some fixed parameters $\beta_0, T_0 \in (0,\infty)$. Translating 
this to the setting of~\eqref{R0}, this leads to $a(\tilde{t}):=\frac{ \beta_0 (\widetilde{T}-\tilde{t})}{\res{\sigma^2}  T_0}$ and $b(\tilde{t}):= \beta_0(1-\frac{\widetilde{T}-\tilde{t}}{\res{\sigma^2} T_0})$ (note that it only makes sense to consider $\widetilde{T}\in (0,\frac{\res{\sigma^2} T_0}{2} )$ in this setting). 
Note that it is easier to first determine $\theta^{-1}(\tilde{t}) = \int_{0}^{\tilde{t}} |b(\tilde{s}) - a(\tilde{s})|^{-2} \,d\tilde{s}$ and then determine $T=\theta^{-1}(\tilde{T})$ and $\theta = (\theta^{-1})^{-1}$. Indeed, $\theta^{-1}(\tilde{t}) = \frac{\res{\sigma^4} T_0^2 \tilde{t}}{\beta_0^2(\res{\sigma^2} T_0 -2\widetilde{T})(\res{\sigma^2} T_0 -2\widetilde{T} +2\tilde{t})}$ and thus 
$T 
= \frac{\res{\sigma^2} T_0 \widetilde{T}}{\beta_0^2(\res{\sigma^2} T_0 - 2\widetilde{T})}$
and  
$$
 \theta(t) 
 = \frac{ \beta_0^2(\res{\sigma^2} T_0- 2\widetilde{T})^2 t }
 { \res{\sigma^4} T_0^2 - 2\beta_0^2(\res{\sigma^2} T_0- 2 \widetilde{T})t },\quad t\in [0,T).
$$
By observing that $(1-x)a'(\theta(t))+x b'(\theta(t))=\frac{(2x-1) \beta_0}{\res{\sigma^2} T_0}$, and
 $b(\theta(t))-a(\theta(t))$ $= \frac{b_0(1-2\res{\sigma^2} T_0 \widetilde{T})}{\res{\sigma^4} T_0^2-2 \beta_0^2(\res{\sigma^2} T_0-2 \widetilde{T}) t}$, one obtains $v$ in terms of $\tilde v$.
\end{example}


\section{\res{Regularity} of the Fokker--Planck equation}\label{sec:reganalysis} 
Let $u(v)$ denote the solution to~\eqref{R1} for some given drift function $v$. Due to the discontinuity between boundary and initial data, it is clear that $u(v)$ is discontinuous at the corner $(t,x)=(0,0)$. This reduces the rate of convergence of standard numerical methods and makes it difficult to provide a theoretical bound on the convergence rate. However, for \emph{constant drift} $v$, a rapidly converging series expansion of $u(v)$ is known (\cite{168.84}), which allows to efficiently approximate $u(v)$ within any given positive tolerance. 
Knowing this, our approach to approximate $u(v)$ for \emph{variable} $v \in C(\overline{I \times \Omega})$ is to \res{\emph{approximate the difference}
$$
e=e(v)=u(v)-u(v_{0}), \text{ where } v_{0}:=v(0,0).
$$}
This function $e(v)$ solves
\be \label{R3}
\left\{
\begin{alignedat}{2}
\partial_t e(t, x)&= \partial^2_x e(t, x) \!+\! v(t, x) \partial_x e(t, x)\!+\! (v(t, x)-v_{0}) \partial_x u(v_{0})&& \quad(t, x) \in I\times \Omega,\\
e(t,0)&=0, \quad e(t,1)=0 &&\quad t \in I,\\
e(0, x)&=0&&\quad   x \in \Omega,
\end{alignedat}
\right.
\ee
which we solve approximately with a numerical method.
To derive a priori bounds for the approximation error, we analyze the smoothness of $e(v)$, see Section~\ref{ssec:regularity_e}. In particular, under additional smoothness conditions on $v$, and using that $(v-v_{0})(0,0)=0$, we show that 
\res{$$
e(v) \text{ \emph{is more smooth than} } u(v_{0}), \text{ \emph{and thus than} }u(v),
$$}%
which shows the benefit of applying the numerical method to \eqref{R3} instead of directly to \eqref{R1}.

It turns out that for any $v$ the smoothness of $u(v)$ is determined by that of the solution $u_H$ of the heat equation on $(0,\infty) \times \R$ that is $0$ at $t=0$ and $1$ at $x=0$. Its smoothness is the topic of the next subsection.

\subsection{The heat kernel} \label{Sheat}
The function
$$
H(t,x):=\frac{1}{2\sqrt{\pi t}} e^{-\frac{x^2}{4t}}
$$
is the heat kernel. It satisfies 
\begin{equation*}
 \begin{aligned}
\partial_t H(t,x)& =\partial^2_x H(t,x) &&(t, x) \in (0,\infty) \times \R,\\
\lim_{t \downarrow 0} \int_\R H(t,x) \phi(x) \,d x&=\phi(0) && 
\revision{\text{for all } \phi \in \mathcal{D}(\R),}
\end{aligned}
\end{equation*}
\revision{the latter being the space of \emph{test functions}.}

Following \cite[Ex.~2.14]{45.491} and \cite{75.215}, for $(t, x) \in (0,\infty) \times \R$ we define
$$
u_H(t,x):=2 \int_x^\infty H(t,y)\,dy={\textstyle \frac{2}{\sqrt{\pi}}} \int_{\frac{x}{2\sqrt{t}}}^\infty e^{-s^2}\,ds=\Erfc({\textstyle \frac{x}{2\sqrt{t}}}).
$$
Knowing that $\int_{0}^\infty {\textstyle \frac{1}{\sqrt{\pi t}}} e^{-\frac{y^2}{4t}} \,dy=1$, and 
${\displaystyle \lim_{t \downarrow 0}} \int_{x}^\infty {\textstyle \frac{1}{\sqrt{\pi t}}} e^{-\frac{y^2}{4t}} \,dy=0$ for $x>0$, we have 
$$
\left\{
\begin{alignedat}{2}
\partial_t u_H(t,x)& =\partial^2_x u_H(t,x) && \quad(t, x) \in (0,\infty) \times \R,\\
u_H(t,0)& =1 && \quad t>0,\\
u_H(0,x):=\lim_{t \downarrow 0} u_H(t,x)&=0&& \quad x>0.
\end{alignedat}
\right.
$$



The following lemma turns out to be handy to analyze the smoothness of $u_H$ restricted to $I \times \Omega$.

\begin{lemma} \label{lemmie3}
For $p>0$, $\alpha,\beta \in \R$, it holds that $\int_0^T \int_0^1 |t^\alpha x^\beta e^{-\frac{x^2}{4t}}|^p \,dx\,dt<\infty$ if and only if $p \beta>-1$ and $p(2\alpha +\beta) >-3$.
\end{lemma}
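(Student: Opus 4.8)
The plan is to exploit the nonnegativity of the integrand: since $t,x>0$ the modulus is simply $t^{p\alpha}x^{p\beta}e^{-p x^2/(4t)}$, so by Tonelli's theorem the double integral equals either iterated integral and may be analyzed without integrability caveats. I would integrate in $x$ first (for fixed $t$) and then in $t$.

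The necessity of the first condition $p\beta>-1$ is immediate and isolates the behaviour at the edge $x=0$: for any fixed $t>0$ the factor $e^{-px^2/(4t)}$ tends to $1$ as $x\downarrow 0$, so the inner integrand is comparable to $x^{p\beta}$ near $x=0$, and $\int_0^1 x^{p\beta}\,dx<\infty$ iff $p\beta>-1$. Hence if $p\beta\le-1$ the inner integral is already $+\infty$ for every $t$, forcing the double integral to diverge.

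Assuming $p\beta>-1$, the key step is the self-similar substitution $u=x/\sqrt t$ (so $x=u\sqrt t$, $dx=\sqrt t\,du$), which decouples the two variables and pulls all the $t$-dependence into a power:
\[
\int_0^1 t^{p\alpha}x^{p\beta}e^{-px^2/(4t)}\,dx
= t^{\,p\alpha+\frac{p\beta+1}{2}}\int_0^{1/\sqrt t} u^{p\beta}e^{-pu^2/4}\,du.
\]
I would then show that, under $p\beta>-1$, the remaining factor $g(t):=\int_0^{1/\sqrt t}u^{p\beta}e^{-pu^2/4}\,du$ satisfies $g(t)\eqsim 1$ uniformly for $t\in(0,T)$: it is nonincreasing in $t$, bounded above by the convergent integral $\int_0^\infty u^{p\beta}e^{-pu^2/4}\,du<\infty$ (finite precisely because $p\beta>-1$ tames $u=0$ while the Gaussian tames $u=\infty$), and bounded below by its value $g(T)>0$ at the right endpoint.

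Consequently the double integral is comparable to $\int_0^T t^{\,p\alpha+(p\beta+1)/2}\,dt$, which is finite iff the exponent exceeds $-1$, i.e. iff $p\alpha+\tfrac{p\beta+1}{2}>-1$, equivalently $p(2\alpha+\beta)>-3$. Combining the two reductions gives both directions of the claimed equivalence. The only point requiring real care—the main obstacle—is the two-sided bound $g(t)\eqsim 1$: it is precisely this uniform nondegeneracy of the inner integral that lets the two conditions emerge cleanly and independently, the first from the edge $x=0$ and the second from the corner $t=0$.
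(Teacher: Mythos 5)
Your proof is correct. It rests on the same underlying idea as the paper's --- a self-similar change of variables adapted to the scaling of $e^{-x^2/(4t)}$ --- but the execution is genuinely different. The paper applies the two-dimensional diffeomorphism $(\lambda,x)\mapsto(x^2/(4\lambda),x)$, integrates in $x$ first over $(0,\min(1,2\sqrt{\lambda T}))$ (which produces the condition $p(2\alpha+\beta)>-3$), and then splits the resulting $\lambda$-integral at $\lambda=\tfrac{1}{4T}$ to extract $p\beta>-1$ from the piece near $\lambda=0$; both conditions thus emerge from a single explicit formula. You instead keep $t$ fixed, rescale only $x$ via $u=x/\sqrt t$, and observe that under $p\beta>-1$ the inner integral $g(t)=\int_0^{1/\sqrt t}u^{p\beta}e^{-pu^2/4}\,du$ is monotone in $t$ and hence uniformly bounded above and below by positive constants on $(0,T)$, reducing everything to $\int_0^T t^{p\alpha+(p\beta+1)/2}\,dt$. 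Your version is slightly more elementary (no Jacobian computation, no case split in the outer integral) and makes the two conditions decouple cleanly --- $p\beta>-1$ from the edge $x=0$ at fixed $t$, and $p(2\alpha+\beta)>-3$ from the corner $t=0$ --- whereas the paper's computation has the minor advantage of yielding an exact closed-form expression for the integral and of transferring verbatim to the domain $(0,\infty)\times\Omega$ mentioned in the subsequent remark (your argument also extends there, but the lower bound $g(t)\geq g(T)$ must then be replaced by noting that only the upper bound on $g$ is needed for sufficiency and that divergence for $t$ near $0$ already follows from the restriction to $(0,T)$). Both arguments are complete and rigorous.
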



\begin{proof} 
The mapping
$$
\Phi \colon \big\{(\lambda,x) \in (0,\infty)\times (0,1)\colon x<2 \sqrt{\lambda T}\big\} \rightarrow (0,T)\times (0,1)\colon (\lambda,x) \mapsto \big(\frac{x^2}{4\lambda},x\big)
$$ 
is a diffeomorphism, and $|\Omega\Phi(\lambda,x)|=\frac{x^2}{4 \lambda^2}$. One obtains
\begin{align*}
\int_0^T\int_0^1 |t^\alpha x^\beta e^{-\frac{x^2}{4t}}|^p \,dx\,dt=
\int_0^\infty \int_0^{\min(1,2 \sqrt{\lambda T})} \big(\frac{x^2}{4 \lambda}\big)^{\alpha p} x^{\beta p} e^{-p \lambda}\frac{x^2}{4 \lambda^2} \,dx\,d\lambda\\
= 4^{-\alpha p -1} \int_0^\infty \lambda^{-\alpha p -2} e^{-p \lambda} \int_0^{\min(1,2 \sqrt{\lambda T})}  x^{2 \alpha p+\beta p+2}\,dx \,d \lambda.
\end{align*}
The integral over $x$ is finite if and only if $p(2\alpha +\beta) >-3$, and if so, the expression is equal to
$$
{\textstyle \frac{4^{-\alpha p -1}}{2 \alpha p+\beta p+3}} \Big[ (2\sqrt{T})^{2\alpha p +\beta p+3}
\int_0^\frac{1}{4T} \lambda^{(\beta p-1)/2} e^{-p \lambda} d \lambda+ \int_\frac{1}{4T}^\infty \lambda^{-\alpha p -2} e^{-p \lambda} \,d\lambda\Big]
$$ with the first integral being finite if and only if $p \beta>-1$.
\end{proof}

Following~\cite{314.91}, we analyze the regularity \res{of the solutions $u(v)$ and $e(v)$ of the parabolic problems ~\eqref{R1} and~\eqref{R3}, respectively,}  in (intersections of) Bochner spaces.
In particular, the space $L_2(I;H^1(\Omega)) \cap H^1(I;H^{-1}(\Omega))$ plays an important role in this and following sections. For the precise definition of this space and some properties we refer to~\cite[Chapter 25]{314.91}.
With $H^1_{0,\{0\}}(I)$ denoting the closure in $H^1(I)$ of the functions in $C^\infty(I) \cap H^1(I)$ that vanish at $0$,  
we have the following result concerning the smoothness of $u_H$  restricted to $I \times \Omega$.

\begin{corollary} \label{corol1} $u_H \in L_2(I;H^1(\Omega)) \cap H^1_{0,\{0\}}(I;H^{-1}(\Omega))$, but $u_H \not\in H^1_{0,\{0\}}(I;L_2(\Omega))$ and $u_H \not\in L_2(I;H^2(\Omega))$.
Furthermore, $t \partial_t \partial_x u_H, x \partial^2_x u_H, t \partial^2_x u_H \in L_2(I \times \Omega)$, and 
$x \partial_t \partial_x u_H \in L_2(I;H^{-1}(\Omega))$.
\end{corollary}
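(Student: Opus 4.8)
The plan is to reduce every assertion to the integrability criterion of Lemma~\ref{lemmie3}, applied throughout with $p=2$, after recording the relevant derivatives of $u_H$ in closed form. Since $u_H(t,x)=\Erfc(\frac{x}{2\sqrt{t}})$ and $\Erfc'(z)=-\frac{2}{\sqrt{\pi}}e^{-z^2}$, differentiation gives
$$
\partial_x u_H=-\tfrac{1}{\sqrt{\pi}}\,t^{-1/2}e^{-\frac{x^2}{4t}},\qquad
\partial_x^2 u_H=\partial_t u_H=\tfrac{1}{2\sqrt{\pi}}\,x\,t^{-3/2}e^{-\frac{x^2}{4t}},
$$
$$
\partial_t\partial_x u_H=\tfrac{1}{\sqrt{\pi}}\Big(\tfrac{1}{2}t^{-3/2}-\tfrac{1}{4}x^2 t^{-5/2}\Big)e^{-\frac{x^2}{4t}},
$$
where I have used the heat equation to identify $\partial_t u_H$ with $\partial_x^2 u_H$. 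Each expression is a finite sum of terms of the form $t^\alpha x^\beta e^{-x^2/(4t)}$, so for every such term I would simply read off $(\alpha,\beta)$ and invoke the lemma.

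For the positive $L_2(I\times\Omega)$ statements this is immediate. Boundedness of $u_H$ gives $u_H\in L_2(I\times\Omega)$, while $\partial_x u_H$ corresponds to $(\alpha,\beta)=(-\tfrac{1}{2},0)$, for which $2\beta=0>-1$ and $2(2\alpha+\beta)=-2>-3$; hence $u_H\in L_2(I;H^1(\Omega))$. The three functions in the penultimate assertion correspond to $x\partial_x^2 u_H\sim(-\tfrac{3}{2},2)$, $t\partial_x^2 u_H\sim(-\tfrac{1}{2},1)$, and (after expanding) $t\partial_t\partial_x u_H$ splitting into $(-\tfrac{1}{2},0)$ and $(-\tfrac{3}{2},2)$, each of which satisfies both inequalities. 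The two non-membership claims stem from the single failing case: $\partial_t u_H=\partial_x^2 u_H$ has $(\alpha,\beta)=(-\tfrac{3}{2},1)$, for which $2(2\alpha+\beta)=-4<-3$, so $\partial_t u_H\notin L_2(I\times\Omega)$; this simultaneously excludes $u_H\in H^1_{0,\{0\}}(I;L_2(\Omega))$ and $u_H\in L_2(I;H^2(\Omega))$.

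The claims involving $H^{-1}(\Omega)$ I would handle by exploiting that $\partial_x$ maps $L_2(\Omega)$ boundedly into $H^{-1}(\Omega)$ and that $L_2(\Omega)\hookrightarrow H^{-1}(\Omega)$. Writing $\partial_t u_H=\partial_x(\partial_x u_H)$ with $\partial_x u_H\in L_2(I\times\Omega)$ yields $\partial_t u_H\in L_2(I;H^{-1}(\Omega))$; combined with $u_H\in L_2(I;L_2(\Omega))\subset L_2(I;H^{-1}(\Omega))$ and a vanishing trace at $t=0$, this gives $u_H\in H^1_{0,\{0\}}(I;H^{-1}(\Omega))$. The trace I would justify by dominated convergence: $u_H(t,\cdot)\to 0$ in $L_2(\Omega)$ as $t\downarrow 0$, and since $u_H\in H^1(I;H^{-1}(\Omega))$ its trace at $0$ is well defined and coincides with this limit.

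The genuinely delicate point, which I would treat most carefully, is the final claim $x\partial_t\partial_x u_H\in L_2(I;H^{-1}(\Omega))$. Here a direct application of Lemma~\ref{lemmie3} \emph{fails}: the leading term of $x\partial_t\partial_x u_H$ carries $(\alpha,\beta)=(-\tfrac{3}{2},1)$ and so is not in $L_2(I\times\Omega)$, meaning the weaker topology in $x$ is indispensable. The trick is the product-rule identity
$$
x\,\partial_t\partial_x u_H=\partial_x\big(x\,\partial_t u_H\big)-\partial_t u_H,
$$
valid as an identity of distributions on $\Omega$ for a.e.\ $t$. By the penultimate assertion $x\,\partial_t u_H=x\,\partial_x^2 u_H\in L_2(I\times\Omega)$, whence $\partial_x(x\,\partial_t u_H)\in L_2(I;H^{-1}(\Omega))$, while $\partial_t u_H\in L_2(I;H^{-1}(\Omega))$ was just established; subtracting gives the claim. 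The main obstacle is precisely recognizing that one must transfer a power of $x$ onto a distributional $x$-derivative in this way rather than estimating $x\partial_t\partial_x u_H$ pointwise. Once this identity is in place, the whole corollary reduces to the already-verified $L_2$ memberships.
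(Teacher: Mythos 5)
Your proof is correct and follows essentially the same route as the paper: record the explicit derivatives of $u_H=\Erfc(\frac{x}{2\sqrt{t}})$ and feed each monomial $t^\alpha x^\beta e^{-x^2/(4t)}$ into Lemma~\ref{lemmie3} with $p=2$, using the boundedness of $\partial_x\colon L_2(\Omega)\to H^{-1}(\Omega)$ for the negative-order claims. The one place you genuinely deviate is the last assertion: the paper shows $x t^{-3/2}e^{-x^2/(4t)},\,x^3t^{-5/2}e^{-x^2/(4t)}\in L_2(I;H^{-1}(\Omega))$ by computing explicit $x$-antiderivatives ($\int_{-\infty}^x ye^{-y^2/(4t)}dy$ and $\int_{-\infty}^x y^3e^{-y^2/(4t)}dy$) and checking that those lie in $L_2(I\times\Omega)$, whereas you use the product-rule identity $x\,\partial_t\partial_x u_H=\partial_x(x\,\partial_t u_H)-\partial_t u_H$ together with the already-established memberships $x\,\partial_x^2u_H\in L_2(I\times\Omega)$ and $\partial_t u_H\in L_2(I;H^{-1}(\Omega))$. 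The two arguments have identical content (your identity is the structural form of the paper's integration by parts), but yours avoids the hand computation of the cubic antiderivative and makes transparent why the weaker spatial norm suffices; the paper's version has the minor advantage of exhibiting the representing $L_2$ functions explicitly. You are also slightly more careful than the paper in justifying the vanishing trace at $t=0$ needed for membership in $H^1_{0,\{0\}}(I;H^{-1}(\Omega))$.
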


\begin{proof} By applications of Lemma~\ref{lemmie3}, we infer that $\partial_x u_H=-2H \in L_2(I \times \Omega)$, and that
$\partial_t u_H(t,x)=\frac{1}{2\sqrt{\pi}} x t^{-\frac32} e^{-\frac{x^2}{4t}} \not\in L_2(I \times \Omega)$.
This yields $u_H \in L_2(I;H^1(\Omega))$ and $u_H \not \in H_{0,\{0\}}^1(I;L_2(\Omega))$.

If $\partial_x F=f$, then $f \in L_2(I;H^{-1}(\Omega))$ if and only if $F \in L_2(I \times \Omega)$. We have
$\int_{-\infty}^x \partial_t u_H(t,y)\,dy=-\frac{t^{-\frac12}}{\sqrt{\pi}}e^{-\frac{x^2}{4t}} \in L_2(I \times \Omega)$, so indeed $u_H \in H^1_{0,\{0\}}(I;H^{-1}(\Omega))$.

It holds
$$
\partial_x^2 u_H(t,x)=-2\partial_x H(t,x)=\frac{1}{2 \sqrt{\pi}} x t^{-\frac32} e^{-\frac{x^2}{4t}} \not\in L_2(I \times \Omega),
$$
or $u_H \not\in L_2(I;H^2(\Omega))$, but $ x \partial^2_x u_H, t \partial^2_x u_H \in L_2(I \times \Omega)$.

We have $\partial_x\partial_t u_H=(t^{-\frac32}-\frac12 x^2 t^{-\frac52}) \frac{e^{-\frac{x^2}{4t}}}{2 \sqrt{\pi}}$, so
$t \partial_x\partial_t u_H \in L_2(I \times \Omega)$. 
Proving that $x \partial_x\partial_t u_H \in L_2(I;H^{-1}(\Omega))$ amounts to proving  
$x t^{-\frac32}e^{-\frac{x^2}{4t}},\,t^{-\frac52}x^3e^{-\frac{x^2}{4t}} \in L_2(I;H^{-1}(\Omega))$, i.e., proving that
$t^{-\frac32} \int_{-\infty}^x y e^{-\frac{y^2}{4t}}dy,\,t^{-\frac52}\int_{-\infty}^x y^3e^{-\frac{y^2}{4t}} dy \in L_2(I \times \Omega)$.
The first function equals $-2t^{-\frac12}e^{-\frac{x^2}{4t}}$, which is in $L_2(I \times \Omega)$,
and the second function equals $-8t^{-\frac12} e^{-\frac{x^2}{4t}} -2t^{-\frac32} x^2  e^{-\frac{x^2}{4t}}$, which is also in $L_2(I \times \Omega)$.
\end{proof}

Finally in this subsection, notice that from $\res {\partial_t} u_H(t,x)=\frac{1}{2\sqrt{\pi}} x t^{-\frac32} e^{-\frac{x^2}{4t}}$, it follows that for any $x>0$ and $k \in \N_0$,
\be \label{e3}
\lim_{t \downarrow 0} \res {\partial^k_t} u_H(t,x)=0.
\ee

\subsection{Regularity of the parabolic problem with homogeneous initial and boundary conditions} \label{sec:reg}
Knowing that $e(v)$ is the solution of the parabolic problem \eqref{R3} that has homogeneous initial and boundary conditions, we study the regularity of such a problem.

Given functions $v\in L_{\infty}(I\times \Omega)$ and $f\in L_2(I;H^{-1}(\Omega))$, let $w$ solve
\be \label{Rhom}
\left\{
\begin{alignedat}{2}
\partial_t w(t, x)&= \partial^2_x w(t, x) +v(t, x) \partial_x w(t, x)+f(t,x)&& \quad(t, x) \in I\times \Omega,\\
w(t,0)&=0, \quad w(t,1)=0 &&\quad t \in I,\\
w(0, x)&=0&&\quad   x \in \Omega,
\end{alignedat}
\right.
\ee
where the spatial differential operators at the right-hand side should be interpreted in a weak sense, i.e.,
$((\partial_x^2 +v\partial_x)\eta)(\zeta):=\int_D -\partial_x \eta \partial_x \zeta+ v\partial_x \eta \,\zeta\,dx$.
It is well-known that
\be \label{defL}
L(v):=w \mapsto f \in  \Lis(L_2(I;H^1_0(\Omega)) \cap H^1_{0,\{0\}}(I;H^{-1}(\Omega)),L_2(I;H^{-1}(\Omega)))
\ee
(see, e.g., \cite[Thm.~26.1]{314.91}).
Under additional smoothness conditions on the right-hand side $f$ beyond being in $L_2(I;H^{-1}(\Omega))$, additional  smoothness of the solution $w$ can be demonstrated:

\begin{proposition} \label{reg} a) If $v \in W^1_\infty(I \times \Omega)$, then
\begin{align*}
L(v)^{-1} \in \cL\Big(&
 L_2(I;H^1(\Omega)) \cap  H^1(I;H^{-1}(\Omega)),\\
&H^1_{0,\{0\}}(I;H^1_0(\Omega))\cap H^2(I;H^{-1}(\Omega)) \cap L_2(I;H^3(\Omega))
\Big).
\end{align*}

b) If $v \in L_\infty(I\times \Omega)$, then 
$$
L(v)^{-1}  \in \cL\Big(L_2(I \times \Omega),L_2(I;H^2(\Omega)) \cap H^1_{0,\{0\}}(I;L_2(\Omega))\Big),
$$
\end{proposition}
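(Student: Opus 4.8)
The plan is to regard the first-order term $v\,\partial_x$ as a lower-order perturbation of the heat operator, so that both statements reduce to mapping properties of $L(0)$, for which the Dirichlet eigenbasis $\phi_k(x)=\sqrt2\sin(k\pi x)$, $\lambda_k=(k\pi)^2$, is available and the equation decouples into the scalar ODEs $\dot c_k+\lambda_k c_k=g_k$, $c_k(0)=0$, with solution $c_k(t)=\int_0^t e^{-\lambda_k(t-s)}g_k(s)\,ds$. I would prove statement b) first, since the bound it furnishes is used in the proof of a).

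For b), the case $v=0$ is classical $L_2$-maximal parabolic regularity for the heat equation: testing each ODE against $\dot c_k$ and using $c_k(0)=0$ gives $\sum_k\int_I|\dot c_k|^2\lesssim\sum_k\int_I|g_k|^2$, hence (via $\lambda_k c_k=g_k-\dot c_k$) $\|\partial_t w\|_{L_2(I\times\Omega)}+\|w\|_{L_2(I;H^2(\Omega))}\lesssim\|f\|_{L_2(I\times\Omega)}$; this energy estimate is the one obtained by testing \eqref{Rhom} with $\partial_t w$, and may alternatively be quoted from \cite{314.91}. For general $v\in L_\infty(I\times\Omega)$ I would bootstrap: writing $L(0)w=f+v\,\partial_x w=:g$, the baseline isomorphism \eqref{defL} gives $w\in L_2(I;H^1_0(\Omega))$ with $\|w\|_{L_2(I;H^1_0)}\lesssim\|f\|_{L_2(I;H^{-1})}\le\|f\|_{L_2(I\times\Omega)}$, so that $g\in L_2(I\times\Omega)$ with $\|g\|_{L_2(I\times\Omega)}\lesssim(1+\|v\|_{L_\infty})\|f\|_{L_2(I\times\Omega)}$; applying the case $v=0$ to $w=L(0)^{-1}g$ (legitimate by uniqueness) yields the claim. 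Only $v\in L_\infty$ is needed here.

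For a), I would first establish the higher-order mapping property for $L(0)$. From $c_k(t)=\int_0^t e^{-\lambda_k(t-s)}g_k(s)\,ds$ the three output conditions read as finiteness of $\sum_k\lambda_k\int_I|\dot c_k|^2$ (i.e.\ $w\in H^1(I;H^1_0(\Omega))$), of $\sum_k\lambda_k^{-1}\int_I|\ddot c_k|^2$ (i.e.\ $w\in H^2(I;H^{-1}(\Omega))$), and of $\sum_k\lambda_k^3\int_I|c_k|^2$ (i.e.\ $w\in L_2(I;H^3(\Omega))$), while the two input conditions $g\in L_2(I;H^1(\Omega))\cap H^1(I;H^{-1}(\Omega))$ are exactly $\sum_k\lambda_k\int_I|g_k|^2<\infty$ and $\sum_k\lambda_k^{-1}\int_I|\dot g_k|^2<\infty$. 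The uniform-in-$k$ maximal-regularity bound $\|\lambda_k e^{-\lambda_k\cdot}\ast g_k\|_{L_2(I)}\lesssim\|g_k\|_{L_2(I)}$ (Young's convolution inequality, since $\|\lambda_k e^{-\lambda_k\cdot}\|_{L_1}\le1$) controls $\sum_k\lambda_k^3\int_I|c_k|^2$ by the first input; combined with $\ddot c_k=\lambda_k^2 c_k-\lambda_k g_k+\dot g_k$ and $\dot c_k=-\lambda_k c_k+g_k$ it controls the other two output quantities by both inputs, and $w(0)=0$ is built in, giving $w\in H^1_{0,\{0\}}(I;H^1_0(\Omega))$. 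For general $v\in W^1_\infty(I\times\Omega)$ I would again bootstrap with $g=f+v\,\partial_x w$: statement b) applies since $f\in L_2(I;H^1(\Omega))\subset L_2(I\times\Omega)$ and yields $w\in L_2(I;H^2(\Omega))\cap H^1_{0,\{0\}}(I;L_2(\Omega))$, from which $v\,\partial_x w\in L_2(I;H^1(\Omega))$ (via $\partial_x(v\,\partial_x w)=(\partial_x v)\partial_x w+v\,\partial_x^2 w$) and $v\,\partial_x w\in H^1(I;H^{-1}(\Omega))$ (via $\partial_t(v\,\partial_x w)=(\partial_t v)\partial_x w+v\,\partial_x\partial_t w$) follow; applying the case $v=0$ to $w=L(0)^{-1}g$ finishes the proof.

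The main obstacle is the heat-operator step of a): the anisotropic, parabolically scaled estimates that simultaneously match the two input conditions to the three output conditions, and in particular the treatment of the nonzero temporal trace $g(0)\in H^{-1}(\Omega)$, which is consistent with the target $H^2(I;H^{-1}(\Omega))\hookrightarrow C(\overline I;H^{-1}(\Omega))$ but forbids a zero trace of $\partial_t w$, require care. A secondary point is the multiplication estimate $\|v\,\partial_x\partial_t w\|_{L_2(I;H^{-1})}\lesssim\|v\|_{W^1_\infty}\|\partial_x\partial_t w\|_{L_2(I;H^{-1})}$ used in the perturbation step of a): it rests on multiplication by $v$ being bounded on $H^1_0(\Omega)$ and hence, by duality, on $H^{-1}(\Omega)$, which is precisely why a) requires $v\in W^1_\infty$ whereas b) needs only $v\in L_\infty$.
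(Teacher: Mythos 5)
Your overall architecture --- treating $v\,\partial_x$ as a perturbation of the heat operator and bootstrapping through the known lower-order isomorphisms --- matches the paper's, and part b) together with both perturbation steps is sound. The gap is in the heat-operator step of part a). You identify the input condition $g\in L_2(I;H^1(\Omega))$ with $\sum_k\lambda_k\int_I|g_k|^2<\infty$, but in the Dirichlet eigenbasis this sum characterizes $L_2(I;H^1_0(\Omega))$, a strictly smaller space: for $g(t,\cdot)\equiv 1$ one has $g_k\eqsim k^{-1}$ for odd $k$, so the sum diverges even though $g\in L_2(I;H^1(\Omega))\cap H^1(I;H^{-1}(\Omega))$. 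This is not a removable technicality here, because every application of Proposition~\ref{reg}a) in the paper has a forcing term that does not vanish on $\partial\Omega$ (e.g.\ $f(t,x)=x\,\partial_t u_H(t,1)$ in Lemma~\ref{corol2}, and $(v-v_0)\partial_x u(v_0)$ in Theorem~\ref{smoothness}). Worse, the quantity you aim to bound, $\sum_k\lambda_k^3\int_I|c_k|^2$, is the squared $L_2(I;D((-\Delta)^{3/2}))$-norm and is genuinely \emph{infinite} in these cases: membership of $w(t,\cdot)$ in $D((-\Delta)^{3/2})$ forces $\partial_x^2 w(t,\cdot)=\partial_t w(t,\cdot)-f(t,\cdot)$ to vanish on $\partial\Omega$, i.e.\ $f(t,\cdot)|_{\partial\Omega}=0$. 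So your route to $L_2(I;H^3(\Omega))$, and the derived control of $\sum_k\lambda_k\int_I|\dot c_k|^2$, cannot work as written.

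The fix is to decouple the two outputs, which is what the paper does. The temporal regularity $w\in H^1_{0,\{0\}}(I;H^1_0(\Omega))\cap H^2(I;H^{-1}(\Omega))$ follows (via \cite[Thm.~27.2]{314.91}) from $f\in H^1(I;H^{-1}(\Omega))$ together with the trace $f(0,\cdot)\in L_2(\Omega)$, which is obtained from $f\in L_2(I;H^1(\Omega))\cap H^1(I;H^{-1}(\Omega))$ by interpolation --- not merely $f(0,\cdot)\in H^{-1}(\Omega)$ as you state. In your notation this is seen by integrating by parts in $c_k(t)=\int_0^te^{-\lambda_k(t-s)}g_k(s)\,ds$ to get $\dot c_k(t)=e^{-\lambda_k t}g_k(0)-\int_0^te^{-\lambda_k(t-s)}\dot g_k(s)\,ds$, whose $\lambda_k$-weighted $L_2(I)$-norms are summable using only $\sum_k|g_k(0)|^2<\infty$ and $\sum_k\lambda_k^{-1}\int_I|\dot g_k|^2<\infty$. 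The spatial regularity $w\in L_2(I;H^3(\Omega))$ is then obtained not spectrally but from one-dimensional elliptic regularity applied to $-\partial_x^2w=f+v\,\partial_x w-\partial_t w\in L_2(I;H^1(\Omega))$, which requires no boundary condition on the right-hand side; with that repair your perturbation and bootstrap arguments go through unchanged.
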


\begin{proof} a) If $f \in L_2(I;H^1(\Omega)) \cap  H^1(I;H^{-1}(\Omega))$, then also
$f \in H^1(I;H^{-1}(\Omega))$, and $f(0,\cdot) \in L_2(\Omega)$ with $\|f(0,\cdot)\|_{L_2(\Omega)} \lesssim \|f\|_{L_2(I;H^1(\Omega))}$ $+$ $\|f\|_{H^1(I;H^{-1}(\Omega))}$ (see, e.g., ~\cite[Thm.~25.5]{314.91}). As shown in \cite[Thm.~27.2 and its proof]{314.91}, from the last two properties of $f$, and $v \in W_\infty^1(I;L_\infty(\Omega))$, one has $w=L(v)^{-1} f \in H^1_{0,\{0\}}(I;H^1_0(\Omega))\cap H^2(I;H^{-1}(\Omega))$ with 
\begin{equation*}\|w \|_{H^1_{0,\{0\}}(I;H^1_0(\Omega))\cap H^2(I;H^{-1}(\Omega))} \lesssim \|f\|_{H^1(I;H^{-1}(\Omega))}+\|f(0,\cdot)\|_{L_2(\Omega)}.
\end{equation*}

To show the spatial regularity, i.e., $w \in L_2(I;H^3(\Omega))$, given a constant $\lambda$, we define $w_\lambda(t,\cdot)=w(t,\cdot) e^{-\lambda t}$, $f_\lambda(t,\cdot)=f(t,\cdot) e^{-\lambda t}$. One infers that
\begin{equation}\label{eq:FP_collapsed}
(-\partial_x^2 -v \partial_x+\lambda)w_\lambda=\underbrace{f_\lambda-\partial_t w_\lambda}_{\res{g_\lambda:=}} \,\,\,\text{ on } I \times \Omega,\quad w_\lambda(\cdot,0)=0=w_\lambda(\cdot,1) \,\,\, \text{ on }  I,
\end{equation}
where, as before, the spatial differential operators should be interpreted in a weak sense.
Using that 
\begin{equation*} 
\left|\int_I \int_D v (\partial_x w_\lambda )\,w_\lambda\,dx\,dt \right|\leq \|v\|_{L_\infty(I \times \Omega)} \|\partial_x w_\lambda\|_{L_2(I\times \Omega)} \|w_\lambda\|_{L_2(I \times \Omega)}
\end{equation*}
and Young's inequality, one infers that for $\lambda > \frac{1}{4}\|v\|^2_{L_\infty(I \times \Omega)}$ the bilinear form defined by the left-hand side of~\eqref{eq:FP_collapsed} is bounded and \emph{coercive} on $L_2(I;H^1_0(\Omega)) \times L_2(I;H^1_0(\Omega))$.
Thus for $\lambda > \frac{1}{4}\|v\|^2_{L_\infty(I \times \Omega)}$ we have
$$
A(v,\lambda)\res{:=w_\lambda \mapsto g_\lambda} \in\Lis(L_2(I;H^1_0(\Omega)),L_2(I;H^{-1}(\Omega))).$$
Realizing that $\|\cdot\|_{H^{k+2}(\Omega)}^2=\res{\|\frac{\mathrm{d}^k}{\mathrm{d}x^k} \frac{\mathrm{d}^2}{\mathrm{d} x^2}\cdot\|_{L_2(\Omega)}^2}+\|\cdot\|_{H^{k+1}(\Omega)}^2$, an induction and tensor product argument shows $A(0,0)^{-1} \in \cL(L_2(I;H^k(\Omega)),L_2(I;H^{k+2}(\Omega)))$ for any $k \in \N_0$.
Writing
$$
A(v,\lambda)^{-1}-A(0,0)^{-1}=A(0,0)^{-1}(v \partial_x-\lambda \identity)A(v,\lambda)^{-1},
$$
and using that $v \partial_x \in \cL(L_2(I;H^1(\Omega)), L_2(I;L_2(\Omega))$ by $v \in L_\infty(I\times \Omega)$, one verifies that $
A(v,\lambda)^{-1} \in \cL(L_2(I \times \Omega),L_2(I;H^{2}(\Omega)))$. Repeating the argument, now using that 
$v \partial_x \in \cL(L_2(I;H^2(\Omega)), L_2(I;H^1(\Omega))$ by $v \in L_\infty(I;W_\infty^1( \Omega))$,
one has $
A(v,\lambda)^{-1} \in \cL(L_2(I;H^1(\Omega)),L_2(I;H^{3}(\Omega)))$. Knowing that $f_\lambda-\partial_t w_\lambda$ $\in$ $L_2(I;H^1(\Omega))$ with $\| f_\lambda-\partial_t w_\lambda\|_{L_2(I;H^1(\Omega))} \lesssim \|f\|_{L_2(I;H^1(\Omega))}+\|f\|_{H^1(I;H^{-1}(\Omega))}$, 
one infers that $w_\lambda$ and thus $w \in L_2(I;H^{3}(\Omega))$, and moreover $\| w \|_{L_2(I;H^{\res{3}}(\Omega))} \lesssim \|f\|_{L_2(I;H^1(\Omega))}+\|f\|_{H^1(I;H^{-1}(\Omega))}$.

b) Similar to Part a), it suffices to show that
$$
L(v,\lambda)^{-1}:=f_\lambda \mapsto w_\lambda \in  \cL\Big(L_2(I \times \Omega),L_2(I;H^2(\Omega)) \cap H^1_{0,\{0\}}(I;L_2(\Omega))\Big).
$$
Knowing that $L(v,\lambda)^{-1} \in \cL\big(L_2(I;H^{-1}(\Omega)), L_2(I;H^1_0(\Omega)) \cap H^1_{0,\{0\}}(I;H^{-1}(\Omega))\big)$, and $L(v,\lambda)-L(0,0) = -v \partial_x+\lambda \identity \in \cL\big(L_2(I;H^1_0(\Omega)),L_2(I\times \Omega)\big)$, the proof is completed by  $L(v,\lambda)^{-1}-L(0,0)^{-1}=L(0,0)^{-1}(L(0,0)-L(v,\lambda))L(v,\lambda)^{-1}$ and the
\emph{maximal regularity} result 
$$
L(0,0)^{-1} \in \cL\Big(L_2(I \times \Omega),L_2(I;H^2(\Omega)) \cap H^1_{0,\{0\}}(I;L_2(\Omega))\Big)
$$
from, e.g., \cite{64.57,64.53}. 
\end{proof}

\subsection{The regularity of \bm{$e(v)=u(v)-u(v_{0})$}}\label{ssec:regularity_e}
Recall that $u_H$ denotes the solution of the heat equation studied in Section~\ref{Sheat}, that $u(v)$ denotes the solution to~\eqref{R3} for given $v\in C(\overline{I\times \Omega})$,
and $v_0:=v(0,0)$.
Since $e(v)$ solves \eqref{R3}, i.e., $e(v)$ is the solution $w$ of \eqref{Rhom} for forcing function $f$ given by 
\begin{equation}\label{split}
\begin{aligned}
&(v-v_{0})\partial_x u(v_{0}) 
\\ & =
(v-v_{0})\partial_x (u(v_{0})-u(0))+
(v-v_{0})\partial_x (u(0)-u_H)+
(v-v_{0})\partial_x u_H,
\end{aligned}
\end{equation}
in view of the regularity results proven in Proposition~\ref{reg}, we establish smoothness of $e(v)$ by demonstrating smoothness of each of the three terms at the right-hand side of \eqref{split}.

\begin{lemma} \label{corol2} It holds that
$$
u(0)-u_H \in H^1_{0,\{0\}}(I;H^1_0(\Omega))\cap H^2(I;H^{-1}(\Omega)) \cap L_2(I;H^3(\Omega)).
$$
\end{lemma}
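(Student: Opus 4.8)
The plan is to use that $g:=u(0)-u_H$ again solves the heat equation, so that the only inhomogeneity left in its data is a smooth boundary value at $x=1$, which can be lifted explicitly and then fed into Proposition~\ref{reg}. First I would record the data of $g$. Both $u(0)$ (the solution of~\eqref{R1} with $v\equiv 0$) and $u_H$ satisfy $\partial_t w=\partial_x^2 w$ and take the value $1$ on $\{x=0\}$, so $g$ solves $\partial_t g=\partial_x^2 g$ on $I\times\Omega$ with $g(t,0)=0$, $g(0,x)=0$, and $g(t,1)=-u_H(t,1)=-\Erfc(\tfrac{1}{2\sqrt t})=:\psi(t)$. The function $\psi$ extends to a $C^\infty$ function on $[0,T]$, and by~\eqref{e3} all of its time derivatives vanish at $t=0$; in particular $\psi,\psi',\psi''\in C([0,T])$ and $\psi(0)=\psi'(0)=0$. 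This flatness at $t=0$ is exactly what places the contributions below in the spaces carrying the subscript $\{0\}$.

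Next I would remove the boundary value at $x=1$ by subtracting the explicit lifting $z(t,x):=x\,\psi(t)$ and setting $r:=g-z$. A short computation shows that $r$ has homogeneous initial and boundary conditions and solves~\eqref{Rhom} with $v\equiv 0$ and forcing $f(t,x):=-x\,\psi'(t)$. Since $\psi'\in C([0,T])$ one has $\|f(t,\cdot)\|_{H^1(\Omega)}\lesssim|\psi'(t)|$, and since $\psi''\in C([0,T])$ one has $\|\partial_t f(t,\cdot)\|_{H^{-1}(\Omega)}\lesssim|\psi''(t)|$, so $f\in L_2(I;H^1(\Omega))\cap H^1(I;H^{-1}(\Omega))$; moreover $f(0,\cdot)=-\psi'(0)\,x=0$. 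Proposition~\ref{reg}a), applied with the admissible choice $v\equiv 0\in W^1_\infty(I\times\Omega)$, then yields $r=L(0)^{-1}f\in H^1_{0,\{0\}}(I;H^1_0(\Omega))\cap H^2(I;H^{-1}(\Omega))\cap L_2(I;H^3(\Omega))$.

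Finally I would treat the lifting $z(t,x)=x\,\psi(t)$ by hand: since $x\mapsto x$ is a fixed polynomial while $\psi$ is smooth with $\psi(0)=0$, one has $\|z(t,\cdot)\|_{H^3(\Omega)}\lesssim|\psi(t)|$ and $\|\partial_t^2 z(t,\cdot)\|_{H^{-1}(\Omega)}\lesssim|\psi''(t)|$, so $z$ lies in $L_2(I;H^3(\Omega))\cap H^2(I;H^{-1}(\Omega))\cap H^1_{0,\{0\}}(I;H^1(\Omega))$, and adding $g=r+z$ gives the asserted regularity. I expect the only genuinely analytic point to be the first step: establishing that the trace $\psi(t)=-u_H(t,1)$ is not merely continuous but $C^\infty$ and flat at $t=0$, for which identity~\eqref{e3} is indispensable; once this is secured, the reduction to~\eqref{Rhom} and the appeal to Proposition~\ref{reg}a) are routine. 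The one bookkeeping subtlety to watch is that $g$ carries the nonzero boundary value $\psi$ at $x=1$, so it is the homogeneous remainder $r$ that supplies the $H^1_0(\Omega)$-membership while the smooth lifting $z$ accounts for the remaining boundary behaviour; confirming that the sum indeed lands in the stated intersection is where I would be most careful.
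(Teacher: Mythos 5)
Your proof is correct and is essentially the paper's own argument: your remainder $r=g-z=u(0)-\bigl(u_H(t,x)-x\,u_H(t,1)\bigr)$ is exactly the function $w$ the paper works with, the forcing $-x\psi'(t)=x\,\partial_t u_H(t,1)$ is the same, and both proofs rest on \eqref{e3} for the flatness of $t\mapsto u_H(t,1)$ at $t=0$ followed by Proposition~\ref{reg}a) with $v=0$. Your closing caution is well placed -- since $z(t,\cdot)=x\psi(t)$ does not vanish at $x=1$, the sum $g=r+z$ lies in $H^1_{0,\{0\}}(I;H^1(\Omega))$ rather than $H^1_{0,\{0\}}(I;H^1_0(\Omega))$ as literally stated (the paper glosses over this), but this is harmless because only $\partial_x(u(0)-u_H)$ is used downstream in Theorem~\ref{smoothness}.
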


\begin{proof}
The function $w(t,x):=u(0)(t,x)-(u_H(t,x)-xu_H(t,1))$ satisfies the homogeneous initial and boundary conditions from \eqref{Rhom}, and $\partial_t w(t,x)=\partial_x^2 w(t,x) +x \partial_t u_H(t,1)$. By \eqref{e3} we have 
$(t,x) \mapsto x \partial_t u_H(t,1) \in  L_2(I;H^1(\Omega)) \cap  H^1(I;H^{-1}(\Omega))$, so that Proposition~\ref{reg}a) for $v=0$  and $f(t,x)= x\partial_t u_H(t,1)$ shows that
$$
w \in H^1_{0,\{0\}}(I;H^1_0(\Omega))\cap H^2(I;H^{-1}(\Omega)) \cap L_2(I;H^3(\Omega)).
$$
Because, again by \eqref{e3}, $(t,x) \mapsto xu_H(t,1)$ is in the same space, the proof is completed.
\end{proof}

\begin{lemma} \label{corol3} For \emph{any} $v_0\in\R$, $u(v_0) -u(0) \in L_2(I;H^2(\Omega)) \cap H^1_{0,\{0\}}(I;L_2(\Omega))$.
\end{lemma}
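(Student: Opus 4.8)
The plan is to identify $g := u(v_0)-u(0)$ as the solution of the homogeneous parabolic problem~\eqref{Rhom} with constant drift and a right-hand side lying in $L_2(I \times \Omega)$, and then to invoke the maximal regularity result of Proposition~\ref{reg}b). First I would subtract the two instances of~\eqref{R1}. Since $u(v_0)$ solves $\partial_t u(v_0)=\partial_x^2 u(v_0)+v_0\partial_x u(v_0)$ and $u(0)$ solves $\partial_t u(0)=\partial_x^2 u(0)$, writing $\partial_x u(v_0)=\partial_x g+\partial_x u(0)$ gives
\[
\partial_t g=\partial_x^2 g+v_0\partial_x g+v_0\,\partial_x u(0).
\]
The inhomogeneous boundary values $1$ at $x=0$ and $0$ at $x=1$ cancel, as do the (vanishing) initial values, so $g$ satisfies the homogeneous initial and boundary conditions of~\eqref{Rhom} with the constant drift $v=v_0\in L_\infty(I\times\Omega)$ and forcing $f=v_0\,\partial_x u(0)$.

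Second, I would check that $f\in L_2(I\times\Omega)$, so that Proposition~\ref{reg}b) applies and yields $g=L(v_0)^{-1}f\in L_2(I;H^2(\Omega))\cap H^1_{0,\{0\}}(I;L_2(\Omega))$, which is exactly the claim. The only thing to verify is $\partial_x u(0)\in L_2(I\times\Omega)$. I would split $\partial_x u(0)=\partial_x\big(u(0)-u_H\big)+\partial_x u_H$. By Lemma~\ref{corol2}, $u(0)-u_H\in L_2(I;H^3(\Omega))$, hence $\partial_x(u(0)-u_H)\in L_2(I;H^2(\Omega))\subset L_2(I\times\Omega)$; and by Corollary~\ref{corol1}, $\partial_x u_H=-2H\in L_2(I\times\Omega)$. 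Thus $f=v_0\,\partial_x u(0)\in L_2(I\times\Omega)$.

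The main point requiring care is the identification $g=L(v_0)^{-1}f$: to use~\eqref{defL} and then Proposition~\ref{reg}b), I must know that $g$ genuinely belongs to the solution space $L_2(I;H^1_0(\Omega))\cap H^1_{0,\{0\}}(I;H^{-1}(\Omega))$, so that applying the boundedly invertible operator $L(v_0)$ to $g$ indeed returns $f$. Since $f\in L_2(I\times\Omega)\subset L_2(I;H^{-1}(\Omega))$, the well-posedness statement~\eqref{defL} furnishes a unique solution $w=L(v_0)^{-1}f$ in that space; the remaining routine task is to confirm that $w+u(0)$ solves~\eqref{R1} with drift $v_0$ and hence, by uniqueness of solutions to~\eqref{R1}, equals $u(v_0)$, giving $g=w$. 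I expect this bookkeeping, rather than any estimate, to be the only delicate step, the integrability of the forcing being an immediate consequence of the heat-kernel analysis already carried out in Corollary~\ref{corol1} and Lemma~\ref{corol2}.
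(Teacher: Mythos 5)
Your argument is correct and coincides with the paper's own proof: both identify $w:=u(v_0)-u(0)$ as solving the homogeneous problem~\eqref{Rhom} with constant drift $v_0$ and forcing $\pm v_0\,\partial_x u(0)$, verify $\partial_x u(0)\in L_2(I\times\Omega)$ via Corollary~\ref{corol1} and Lemma~\ref{corol2}, and conclude with Proposition~\ref{reg}b). Your extra remarks on the identification $g=L(v_0)^{-1}f$ are sound bookkeeping that the paper leaves implicit.
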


\begin{proof}
The function $w:=u(v_0)-u(0)$ satisfies the homogeneous initial- and boundary conditions from \eqref{Rhom}, and $\partial_t w(t,x)=\partial_x^2 w(t,x) +v_0 \partial_x w-v_0 \partial_x u(0)$. From $\partial_x u(0) \in L_2(I \times \Omega)$ by Corollary~\ref{corol1} and Lemma~\ref{corol2}, an application of Proposition~\ref{reg}b) for $v = v_0$ and $f=-v_0 \partial_x u(0)$ completes the proof.
\end{proof}

\begin{lemma} \label{corol4} If $v \in W^1_\infty(I \times \Omega) \cap L_\infty(I;W^2_\infty(\Omega))$, then
$$
(v-v_{0}) \partial_x u_H \in L_2(I;H^1(\Omega)) \cap H^1(I;H^{-1}(\Omega)).
$$
\end{lemma}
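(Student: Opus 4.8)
The statement splits into the two memberships $g:=(v-v_{0})\partial_x u_H\in L_2(I;H^1(\Omega))$ and $g\in H^1(I;H^{-1}(\Omega))$, and since $H^1(\Omega)$ and $H^1(I)$ carry no boundary or initial constraints it suffices to bound four quantities in $L_2(I\times\Omega)$ or $L_2(I;H^{-1}(\Omega))$. The guiding idea is to trade the vanishing of $v-v_0$ at the corner $(0,0)$---where $\partial_x u_H$ is most singular---for explicit weights $t$ and $x$ that pair with the weighted regularity of $u_H$ recorded in Corollary~\ref{corol1}. Concretely, I would use the Hadamard-type splitting, valid for a.e.\ $(t,x)$ because $v$ is Lipschitz,
$$v(t,x)-v_0 = t\,a(t,x)+x\,b(t,x),\qquad a:=\int_0^1\partial_t v(\theta t,\theta x)\,d\theta,\quad b:=\int_0^1\partial_x v(\theta t,\theta x)\,d\theta.$$
Here $\|a\|_{L_\infty}\le\|\partial_t v\|_{L_\infty}$ and $\|b\|_{L_\infty}\le\|\partial_x v\|_{L_\infty}$, while differentiating under the integral gives $\partial_x b=\int_0^1\theta\,\partial_x^2 v(\theta t,\theta x)\,d\theta$, so that $b\in L_\infty(I;W^1_\infty(\Omega))$ precisely by the hypothesis $v\in L_\infty(I;W^2_\infty(\Omega))$. (To be fully rigorous the identity and the ensuing product rules are first verified for mollified $v$ and then passed to the limit, as only $L_\infty$-norms of derivatives appear in the bounds.)

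For $g\in L_2(I;H^1(\Omega))$ I would show $g,\partial_x g\in L_2(I\times\Omega)$. Boundedness of $v-v_0$ together with $\partial_x u_H\in L_2(I\times\Omega)$ gives $g\in L_2(I\times\Omega)$. Next $\partial_x g=(\partial_x v)\partial_x u_H+(v-v_0)\partial_x^2 u_H$; the first term lies in $L_2(I\times\Omega)$ since $\partial_x v\in L_\infty$, and inserting the splitting turns the second into $a\,(t\partial_x^2 u_H)+b\,(x\partial_x^2 u_H)$, which is in $L_2(I\times\Omega)$ because $t\partial_x^2 u_H,\,x\partial_x^2 u_H\in L_2(I\times\Omega)$ by Corollary~\ref{corol1} and $a,b\in L_\infty$.

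For $g\in H^1(I;H^{-1}(\Omega))$, the inclusion $g\in L_2(I;H^{-1}(\Omega))$ is immediate from $g\in L_2(I\times\Omega)$, so only $\partial_t g=(\partial_t v)\partial_x u_H+(v-v_0)\partial_t\partial_x u_H$ must be placed in $L_2(I;H^{-1}(\Omega))$. The first term is even in $L_2(I\times\Omega)$ as $\partial_t v\in L_\infty$. For the second I again split, obtaining $a\,(t\partial_t\partial_x u_H)+b\,(x\partial_t\partial_x u_H)$; the summand $a\,(t\partial_t\partial_x u_H)$ belongs to $L_2(I\times\Omega)\hookrightarrow L_2(I;H^{-1}(\Omega))$ because $t\partial_t\partial_x u_H\in L_2(I\times\Omega)$ by Corollary~\ref{corol1} and $a\in L_\infty$.

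The delicate point---and the reason the hypothesis strengthens to $W^2_\infty$ in space---is the remaining summand $b\,(x\partial_t\partial_x u_H)$: Corollary~\ref{corol1} only provides $x\partial_t\partial_x u_H\in L_2(I;H^{-1}(\Omega))$, a negative-order space, so a bare $L_\infty$-bound on $b$ does not suffice. The plan is to use that for $m\in W^1_\infty(\Omega)$ the multiplication $\zeta\mapsto m\zeta$ maps $H^1_0(\Omega)$ boundedly into itself with norm $\lesssim\|m\|_{W^1_\infty(\Omega)}$ (it preserves vanishing boundary values and satisfies the Leibniz rule), whence by duality multiplication by $m$ is bounded on $H^{-1}(\Omega)$ with the same bound. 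Applying this pointwise in $t$ with $m=b(t,\cdot)$ yields $\|b\,(x\partial_t\partial_x u_H)\|_{L_2(I;H^{-1}(\Omega))}\lesssim\|b\|_{L_\infty(I;W^1_\infty(\Omega))}\,\|x\partial_t\partial_x u_H\|_{L_2(I;H^{-1}(\Omega))}<\infty$. Collecting the four contributions gives $\partial_t g\in L_2(I;H^{-1}(\Omega))$ and finishes the proof; the main obstacle is exactly this $H^{-1}$-multiplier estimate, everything else being a bookkeeping of the weighted $u_H$-regularity from Corollary~\ref{corol1}.
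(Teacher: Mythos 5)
Your proof is correct and follows essentially the same route as the paper's: both arguments extract explicit weights $t$ and $x$ from $v-v_{0}$ (you via a Hadamard integral representation along the diagonal, the paper via the axis-aligned difference quotients $\tfrac{v(t,x)-v(0,x)}{t}$ and $\tfrac{v(0,x)-v_{0}}{x}$), pair them with the weighted estimates $t\,\partial_t\partial_x u_H,\ x\,\partial_t\partial_x u_H,\ t\,\partial_x^2 u_H,\ x\,\partial_x^2 u_H$ from Corollary~\ref{corol1}, and handle the one delicate term by observing that a $W^1_\infty(\Omega)$ cofactor acts as a bounded multiplier on $H^{-1}(\Omega)$. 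The only difference is cosmetic; your diagonal splitting even sidesteps the trace-at-$t=0$ regularity that the paper's choice of cofactor implicitly uses.
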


\begin{proof}
Abbreviate $g:=(v-v_{0}) \partial_x u_H$.
Throughout the proof, we use the estimates for $u_H$ proven in Corollary~\ref{corol1}.

We start with proving $\partial_t g =(\partial_t v) \partial_x u_H +(v-v_{0} ) \partial_t \partial_x u_H \in L_2(I;H^{-1}(\Omega))$.
Using $v \in W_\infty^1(I;L_\infty(\Omega))$ and $\partial_x u_H  \in L_2(I \times \Omega)$, the first term is even in $ L_2(I \times \Omega)$.
Writing the second term as
$$
(v(t,x)-v_{0} )\partial_t\partial_x u_H (t,x)={\textstyle \frac{v(t,x)-v(0,x)}{t}}t \partial_t\partial_x u_H (t,x)+{\textstyle \frac{v(0,x)-v_{0} }{x}}x \partial_t\partial_x u_H (t,x),
$$
 from $t \partial_t\partial_x u_H  \in L_2(I \times \Omega)$ and ${\textstyle \frac{v(t,x)-v(0,x)}{t}} \in L_\infty(I\times \Omega)$ by $v \in W^1_\infty(I;L_\infty(\Omega))$, we have 
${\textstyle \frac{v(t,x)-v(0,x)}{t}}t \partial_t\partial_x u_H (t,x) \in L_2(I\times \Omega)$.
Similarly, from $x \partial_t\partial_x u_H (t,x) \in L_2(I;H^{-1}(\Omega))$ and
${\textstyle \frac{v(0,x)-v_{0} }{x}} \in L_\infty(I;W^1_\infty(\Omega))$ by $v \in L_\infty(I;W^2_\infty(\Omega))$, we have 
${\textstyle \frac{v(0,x)-v_{0} }{x}}x \partial_t\partial_x u_H (t,x)$ $\in$ $L_2(I;H^{-1}(\Omega))$, so that $\partial_t g \in L_2(I;H^{-1}(\Omega))$.

It remains to show that $g  \in L_2(I;H^1(\Omega))$. It is clear that $(v-v_{0} )\partial_x u_H  \in L_2(I \times \Omega)$ and $(\partial_x v) \partial_x u_H  \in L_2(I\times \Omega)$ by $v \in L_\infty(I;W_\infty^1(\Omega))$.
Writing 
$$
(v(t,x)-v_{0} )\partial^2_x u_H (t,x)={\textstyle \frac{v(t,x)-v(0,x)}{t}}t \partial^2_x u_H (t,x)+{\textstyle \frac{v(0,x)-v_{0} }{x}}x \partial^2_x u_H (t,x),
$$
from ${\textstyle \frac{v(t,x)-v(0,x)}{t}},\,{\textstyle \frac{v(0,x)-v_{0} }{x}}\in L_\infty(I \times \Omega)$ by $v \in W^1_\infty(I\times \Omega)$, and both $t \partial^2_x u_H (t,x)$ and $x \partial^2_x u_H (t,x) \in L_2(I\times \Omega)$, we obtain  $g  \in L_2(I;H^1(\Omega))$, and the proof is completed.
\end{proof}

By combining the results of the preceding three propositions with the regularity result proven in Proposition~\ref{reg} we obtain the following.
\begin{theorem} \label{smoothness} If $v \in W^1_\infty(I \times \Omega) \cap L_\infty(I;W^2_\infty(\Omega))$, then
$$
e(v)\in H^1_{0,\{0\}}(I;H^1_0(\Omega))\cap H^2(I;H^{-1}(\Omega)) \cap L_2(I;H^3(\Omega)).
$$
\end{theorem}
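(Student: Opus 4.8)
The plan is to exhibit $e(v)$ as $L(v)^{-1}f$, where $f:=(v-v_{0})\partial_x u(v_{0})$ is the forcing function of \eqref{R3}, and then to show that this $f$ lies in $L_2(I;H^1(\Omega)) \cap H^1(I;H^{-1}(\Omega))$. Once that is done, the hypothesis $v\in W^1_\infty(I\times\Omega)$ makes Proposition~\ref{reg}a) applicable, and it immediately yields $e(v)=L(v)^{-1}f \in H^1_{0,\{0\}}(I;H^1_0(\Omega))\cap H^2(I;H^{-1}(\Omega)) \cap L_2(I;H^3(\Omega))$, which is the claim. The whole work therefore reduces to controlling $f$, and for that I would use the decomposition \eqref{split} and treat its three summands in turn.

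The third summand $(v-v_{0})\partial_x u_H$ is placed in $L_2(I;H^1(\Omega)) \cap H^1(I;H^{-1}(\Omega))$ directly by Lemma~\ref{corol4}; this is precisely the place where the extra hypothesis $v\in L_\infty(I;W^2_\infty(\Omega))$ is consumed. For the first two summands I would first record, from Lemmas~\ref{corol3} and~\ref{corol2}, that $\partial_x(u(v_{0})-u(0)) \in L_2(I;H^1(\Omega))\cap H^1(I;H^{-1}(\Omega))$ and that $\partial_x(u(0)-u_H)$ lies in the even smaller space $L_2(I;H^2(\Omega))\cap H^1(I;L_2(\Omega))$: in both cases the single spatial derivative lowers the spatial order by one, and the $H^1_{0,\{0\}}$-in-time regularity of the primitives passes to the derivatives after commuting $\partial_x$ and $\partial_t$. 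In particular both factors sit in $L_2(I;H^1(\Omega))\cap H^1(I;H^{-1}(\Omega))$.

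It then remains to check that multiplication by $w:=v-v_{0}$ maps $L_2(I;H^1(\Omega)) \cap H^1(I;H^{-1}(\Omega))$ into itself. For the spatial component this is the product rule together with $w\in L_\infty(I;W^1_\infty(\Omega))$. For the temporal component I would expand $\partial_t(w\phi)=(\partial_t w)\phi + w\,\partial_t\phi$: the first term lands in $L_2(I\times\Omega)\hookrightarrow L_2(I;H^{-1}(\Omega))$ via $\partial_t w\in L_\infty(I\times\Omega)$ and $\phi\in L_2(I;L_2(\Omega))$, while the second needs the fact that multiplication by a $W^1_\infty(\Omega)$-function is bounded on $H^{-1}(\Omega)$ — which follows by duality, since $\zeta\mapsto w\zeta$ is bounded on $H^1_0(\Omega)$. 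I expect this $H^{-1}$-in-time estimate to be the only genuinely delicate step; everything else is a routine product-rule bookkeeping. Summing the three contributions gives $f\in L_2(I;H^1(\Omega))\cap H^1(I;H^{-1}(\Omega))$, and the application of Proposition~\ref{reg}a) completes the proof.
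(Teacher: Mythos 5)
Your proposal is correct and follows essentially the same route as the paper: the decomposition \eqref{split}, Lemmas~\ref{corol2}, \ref{corol3} and \ref{corol4} to place the three summands of the forcing term in $L_2(I;H^1(\Omega))\cap H^1(I;H^{-1}(\Omega))$, and Proposition~\ref{reg}a) to conclude. The only difference is that you spell out the product-rule and $H^{-1}$-multiplier bookkeeping for multiplication by $v-v_0$, which the paper leaves implicit.
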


\begin{proof} We obtain $(v-v_{0})\partial_x(u(v_{0})-u_H)$  $\in$  $L_2(I;H^1(\Omega))$ $\cap$ $H^1(I;H^{-1}(\Omega))$ from Lemma~\ref{corol2} and \ref{corol3}, whereas Lemma~\ref{corol4} implies that $(v-v_{0})\partial_x u_H \in L_2(I;H^1(\Omega)) \cap H^1(I;H^{-1}(\Omega))$. 
We conclude that $$(v-v_{0})\partial_xu(v_{0}) \in L_2(I;H^1(\Omega)) \cap H^1(I;H^{-1}(\Omega)),$$ so that an application of Proposition~\ref{reg}a) completes the proof.
\end{proof}

Notice that as a consequence of Corollary~\ref{corol1}, Lemma~\ref{corol2} and \ref{corol3}, 
 $u(v_0)\notin H_{0,\{0\}}^1(I; L_2(\Omega))\cup L_2(I\res{;}H^2(\Omega))$. Comparing Corollary~\ref{corol1} with Theorem~\ref{smoothness}, we conclude that
 \res{$$
e(v)=u(v)-u(v_0) \text{ \emph{is indeed more smooth than} } u(v_{0}), \text{ \emph{and thus than} } u(v),
$$}%
confirming the claim we made at the beginning of Section~\ref{sec:reganalysis}.
\medskip


\section{Minimal residual method} \label{SMRM}
For solving \eqref{Rhom} (specifically for the forcing function $f$ as in \eqref{split}, \res{i.e., for solving $e(v)$}), we write it in variational form, i.e., we multiply it by test functions $z\colon I\times \Omega \rightarrow \R$ from a suitable collection, integrate it over $I \times \Omega$, and apply integration by parts with respect to $x$. We thus arrive at
\begin{align*}
&(B w)(z):=\\
&\int_I \int_D \partial_t w(t,x) z(t,x)+\partial_x w(t,x) \partial_x z(t,x)-v(t,x)\partial_x w(t,x)z(t,x)\,dx\,dt\\
&=
\int_I \int_D f(t,x)z(t,x)\,dx\,dt=:f(z)
\end{align*}
for all those test functions.
With
$$
X:=L_2(I;H^1_0(\Omega)) \cap H^1(I;H^{-1}(\Omega)), \quad Y:=L_2(I;H^1_0(\Omega)),
$$
it is known that $(B,\gamma_0) \in \Lis(X ,Y'\times L_2(\Omega))$, 
where $\gamma_0:= w \mapsto w(0,\cdot)$ denotes the initial trace operator, see, e.g., \cite[Chapter IV]{314.91} or~\cite{247.15}.

Already because $X \neq Y \times L_2(\Omega)$, the well-posed system \res{$(B,\gamma_0)w=(f,0)$} cannot be discretized by simple Galerkin discretizations.
Given a family $(X_h)_{h \in \Delta}$ of finite dimensional subspaces of $X$, as discrete approximations to $w$ one may consider the minimizers $\argmin_{\bar{w} \in X_h} \|B \bar{w}-f\|^{2}_{Y'} +\|\gamma_0 \bar{w}\|^{2}_{L_2(\Omega)}$. Since the dual norm $\|\cdot\|_{Y'}$ cannot be evaluated, this approach is not immediately feasible either. Therefore, for $(Y_h)_{h \in \Delta}$ being a second family of finite dimensional subspaces, now of $Y$, for $h \in \Delta$ as a discrete approximation from $X_h$ we consider
\be \label{MRM}
w_h:=\argmin_{\bar{w} \in X_h} \|B \bar{w}-f\|^2_{Y_h'} +\|\gamma_0\bar{w}\|^{2}_{L_2(\Omega)}.
\ee
This minimal residual approach has been studied for general parabolic PDEs in, e.g., \cite{11,249.99,249.992}, where $\Omega$ can be a $d$-dimensional spatial domain for arbitrary $d \geq 1$. 

For parabolic differential operators with a possibly asymmetric spatial part, in our setting caused by a non-zero drift function $v$,
in \cite[Thm.~3.1]{249.992} it has been shown that if $X_h \subset Y_h$ and
\be \label{infsup}
\varrho:=\inf_{h \in \Delta} \inf_{0 \neq \bar{w} \in X_h} \frac{\|\partial_t \bar{w}\|_{Y_h'}}{\|\partial_t \bar{w}\|_{Y'}}>0,
\ee
then
\be \label{quasi}
\|w-w_h\|_X \lesssim \min_{\bar{w} \in X_h} \|w-\bar{w}\|_X,
\ee
where the implied constant in~\eqref{quasi} depends only on $\varrho$ and an upper bound for $\|v\|_{L_\infty(I \times \Omega)}$,
i.e., $w_h$ is a \emph{quasi-best approximation} from $X_h$ with respect to the norm on $X$.

\begin{remark} \label{opmerking} This quasi-optimality result has been demonstrated under the condition that 
the spatial part of the parabolic differential operator is \emph{coercive} on $H^1_0(\Omega) \times H^1_0(\Omega)$ for a.e.~$t \in I$, i.e., 
$$
\int_D \eta' \eta'-v(t,\cdot) \eta' \eta\,dx \gtrsim \|\eta\|^2_{H^1(\Omega)} \quad (\eta \in H_0^1(\Omega)),
$$
which holds true when $\partial_x v \leq 0$ or  $\|v\|_{L_\infty(I\times \Omega)} \sup_{0 \neq \eta \in H^1_0(\Omega)} \frac{\|\eta\|_{L_2(\Omega)}}{\|\eta'\|_{L_2(\Omega)}}<1$, but which might be violated otherwise.

Although this coercivity condition might not be necessary, it can always be enforced  by considering $w_\lambda(t,\cdot):=w(t,\cdot) e^{-\lambda t}$, $f_\lambda(t,\cdot):=f(t,\cdot) e^{-\lambda t}$ instead of $w$ and $f$ with $\lambda$ sufficiently large, see also the proof of Proposition~\ref{reg}. 
By approximating $w_\lambda$ by the minimal residual method, and by multiplying the obtained approximation by $e^{\lambda t}$, an approximation for $w$ is obtained. Since qualitatively the transformations with $e^{\pm\lambda t}$ do not affect the smoothness of solution or right-hand side, for convenience in the following we pretend that coercivity holds true for  \eqref{Rhom}.
\end{remark}

As in~\cite{249.992,249.99}, we equip $Y_h$ in \eqref{MRM} with the energy norm
$$
\| z\|_Y^2 := (A_s z)(z) \quad(z\in Y_h), 
$$
where
\begin{align*}
&(A_s z)(\bar z) :=\\
& \int_I \int_D \partial_x z(t,x)\partial_x \bar z (t,x) - \frac{v(t,x)}{2} (\partial_x z(t,x) \bar z(t,x) + z(t,x) \partial_x \bar z(t,x)) \,d x\,dt
\end{align*}
denotes the symmetric part of the spatial differential operator.
Equipping $Y_h$ and  $X_h$ with bases $\Phi^h=\{\phi^h_i\}$ and $\Psi^h=\{\psi^h_j\}$, \res{respectively}, and denoting by $\bm{w}^h$ the representation of the minimizer $w_h$ with respect to~$\Psi_h$, $\bm{w}^h$ is found as the second component of the solution of
\begin{align}\label{eq:discrete system}
\left[\begin{array}{@{}cc@{}} \bm{A}_s^h & \bm{B}^h \\
{\bm{B}^h}^\top & \bm{C}^h
\end{array}\right]
\left[\begin{array}{@{}c@{}} \bm{\mu}^h \\ \bm{w}^h \end{array}\right]
=
\left[\begin{array}{@{}c@{}} \bm{f}^h \\ 0\end{array}\right],
\end{align}
where $(\bm{A}_s^h)_{i j}:= (A_s\phi^h_j)(\phi^h_i)$, $\bm{B}^h_{i j}:=(B \psi^h_j)(\phi^h_i)$, $\bm{C}^h_{i j}:= \int_D \psi^h_j(0,x) \psi^h_i(0,x) \, dx$, and
$\bm{f}^h_i:=f(\phi^{\res{h}}_i)$.
The operator $A_s$ can be replaced by any other spectrally equivalent operator on $Y_h$ 
without compromising the quasi-optimality result \eqref{quasi}. We refer to~\cite{249.992,249.99} for details.

Let $P_1$ be the set of polynomials of degree one. 
Taking for $n:=1/h \in \N$,
\begin{align}
V_{x,h}&:=\big\{\eta\in H^1_0(\Omega)\colon \eta|_{((i-1)h,ih)} \in P_1 \text{ for } i=1,\dots,n \big\},\notag
\\
V_{t,h}&:=\big\{\zeta\in H^1(I)\colon  \zeta|_{((i-1)hT,ihT)} \in P_1 \text{ for } i=1,\dots,n \big\},\notag
\\
X_h&:=V_{t,h} \otimes V_{x,h}, \label{eq:defXh}
\end{align}
it is known, cf.~\cite[Sect.~4]{249.99}, that condition \eqref{infsup} is satisfied for 
\begin{equation}\label{eq:defYh}
Y_h:=\big\{\zeta\in L_2(I)\colon \zeta|_{((i-1)hT,ihT)} \in P_1 \text{ for } i=1,\dots,n \big\}  \otimes V_{x,h},
\end{equation}
where obviously also  $X_h \subset Y_h$.

Applying this approach for $f=(v-v_{0})\partial_x u(v_{0})$, in view of \eqref{quasi} the error of the obtained approximation for $e(v)$ with respect to~the $X$-norm can be bounded 
by the error of the best approximation from $X_h$. To bound the latter error we recall from Theorem~\ref{smoothness} that for $v \in W^1_\infty(I \times \Omega) \cap L_\infty(I;W^2_\infty(\Omega))$, it holds that
$$
e(v)\in \big(H^1_{0,\{0\}}(I) \otimes H^1_0(\Omega) \big)\cap \big(H^2(I) \otimes H^{-1}(\Omega) \big) \cap \big(L_2(I) \otimes H^3(\Omega)\big).
$$

With $Q_{x,h}$, $Q_{t,h}$ denoting the $L_2(\Omega)$- or $L_2(I)$-orthogonal projectors onto $V_{x,h}$ or $V_{t,h}$, respectively, $Q_{t,h} \otimes Q_{x,h}$ is a projector onto $X_h$.
Writing
$$
\identity- Q_{t,h} \otimes Q_{x,h}=( \identity-Q_{t,h}) \otimes Q_{x,h}+\identity \otimes (\identity- Q_{x,h}),
$$
and using that 
\begin{align*}
&\|\identity - Q_{x,h}\|_{\cL(H^1_0(\Omega)\cap H^2(\Omega),H^1_0(\Omega))} \lesssim h,\quad
\|Q_{x,h}\|_{\cL(H_0^1(\Omega),H_0^1(\Omega))} \lesssim 1,\\
&\|\identity - Q_{t,h}\|_{\cL(H^1(I),L_2(I))} \lesssim h, \quad
\|\identity \|_{\cL(L_2(I),L_2(I))} = 1
\end{align*}
by standard interpolation estimates 
 and uniform $H^1$-boundedness of these $L_2$-orthogonal projectors, see e.g.~\cite[\S 3]{34.55}, one infers that
$$
\|\identity- Q_{t,h} \otimes Q_{x,h}\|_{\cL((L_2(I) \otimes (H^1_0(\Omega)\cap H^2(\Omega)) )
\cap ( H^1(I) \otimes H^1_0(\Omega)),L_2(I) \otimes H^1_0(\Omega))} \lesssim h.
$$
Similarly using that
\begin{align*}
&\|\identity - Q_{x,h}\|_{\cL(L_2(\Omega),H^{-1}(\Omega))}=\|\identity - Q_{x,h}\|_{\cL(H^1_0(\Omega),L_2(\Omega))} \lesssim h,\\
&\|Q_{x,h}\|_{\cL(H^{-1}(\Omega),H^{-1}(\Omega))} =\|Q_{x,h}\|_{\cL(H_0^{1}(\Omega),H_0^{1}(\Omega))} \lesssim 1, \\
& \|\identity - Q_{t,h}\|_{\cL(H^2(I),H^1(I))} \lesssim h, \quad  \|\identity\|_{\cL(H^1(I),H^1(I))} =1,
\end{align*}
one infers that
$$
\|\identity- Q_{t,h} \otimes Q_{x,h}\|_{\cL(( H^1(I)  \otimes L_2(\Omega) )\cap ( H^2(I) \otimes H^{-1}(\Omega) ),  H^1(I) \otimes H^{-1}(\Omega))}\lesssim h.
$$

Our findings are summarized in the following theorem.
\begin{theorem}\label{thm:optimal convergence} For $v \in W^1_\infty(I \times \Omega) \cap L_\infty(I;W^2_\infty(\Omega))$ and $X_h$, $Y_h$ as defined in~\eqref{eq:defXh} and~\eqref{eq:defYh}, the numerical approximation $e_h=e_h(v) \in X_h$ to $e=e(v)$ obtained by the application of the minimal residual method to \eqref{R3}\footnote{If necessary taking into account the transformations discussed in Remark~\ref{opmerking}.} satisfies
$$
\|e-e_h\|_X \lesssim h.
$$ 
\end{theorem}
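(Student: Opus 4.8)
The plan is to assemble three ingredients that are already in place: the abstract quasi-optimality \eqref{quasi} of the minimal residual method, the sharp regularity of $e(v)$ from Theorem~\ref{smoothness}, and the tensor-product projection estimates established immediately before the statement.

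First I would check the hypotheses under which \eqref{quasi} is valid. The inclusion $X_h \subset Y_h$ is immediate from the definitions \eqref{eq:defXh} and \eqref{eq:defYh}, and the uniform inf--sup condition \eqref{infsup} holds for precisely this pair $(X_h,Y_h)$ by \cite[Sect.~4]{249.99}. The coercivity of the spatial part of the operator, required by \cite[Thm.~3.1]{249.992}, may be assumed without loss of generality via the exponential rescaling $w_\lambda(t,\cdot)=w(t,\cdot)e^{-\lambda t}$ described in Remark~\ref{opmerking}, since that transformation leaves the smoothness of the solution and the order of convergence unchanged. With these in hand, \eqref{quasi} applies to $w=e(v)$, giving $\|e-e_h\|_X \lesssim \min_{\bar w \in X_h}\|e-\bar w\|_X$ with a constant depending only on $\varrho$ and an upper bound for $\|v\|_{L_\infty(I\times\Omega)}$.

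Next I would bound the best-approximation error by inserting the explicit competitor $(Q_{t,h}\otimes Q_{x,h})e \in X_h$, reducing the task to estimating $\|(\identity-Q_{t,h}\otimes Q_{x,h})e\|_X$. Since the $X$-norm is (equivalent to) the intersection norm of $L_2(I;H^1_0(\Omega))$ and $H^1(I;H^{-1}(\Omega))$, I would control each of the two constituent components separately using the two operator-norm bounds proved just above the theorem. For the $L_2(I;H^1_0(\Omega))$-component, Theorem~\ref{smoothness} places $e$ in $L_2(I;H^3(\Omega)) \cap H^1_{0,\{0\}}(I;H^1_0(\Omega))$, hence in $(L_2(I)\otimes(H^1_0(\Omega)\cap H^2(\Omega)))\cap(H^1(I)\otimes H^1_0(\Omega))$, so the first estimate produces a factor $h$. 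For the $H^1(I;H^{-1}(\Omega))$-component, the same theorem places $e$ in $H^1_{0,\{0\}}(I;H^1_0(\Omega)) \cap H^2(I;H^{-1}(\Omega))$, hence in $(H^1(I)\otimes L_2(\Omega))\cap(H^2(I)\otimes H^{-1}(\Omega))$, so the second estimate again produces a factor $h$.

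Summing the two contributions yields $\|(\identity-Q_{t,h}\otimes Q_{x,h})e\|_X \lesssim h$, and combining this with the quasi-optimality bound of the first step finishes the proof. I do not anticipate a genuine obstacle here, since the substantive work was carried out in Theorem~\ref{smoothness} and in the derivation of the projection estimates; the only point I would take care with is verifying that the $H^3$-in-space and $H^2$-in-time regularity, together with the homogeneous conditions encoded in the subscripts $0,\{0\}$ and in $H^1_0(\Omega)$, genuinely place $e$ in the exact tensor-product intersection domains appearing in the two operator-norm estimates, rather than in slightly larger spaces for which the $O(h)$ rate would not directly follow.
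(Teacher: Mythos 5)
Your proposal is correct and follows essentially the same route as the paper: the text preceding the theorem assembles exactly these three ingredients --- the quasi-optimality \eqref{quasi} (with Remark~\ref{opmerking} handling coercivity), the regularity from Theorem~\ref{smoothness}, and the splitting $\identity-Q_{t,h}\otimes Q_{x,h}=(\identity-Q_{t,h})\otimes Q_{x,h}+\identity\otimes(\identity-Q_{x,h})$ with the two operator-norm bounds. The final point you flag is also fine, since $L_2(I;H^3(\Omega))\cap H^1_{0,\{0\}}(I;H^1_0(\Omega))\cap H^2(I;H^{-1}(\Omega))$ embeds into both intersection domains appearing in the projection estimates.
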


Notice that for this space $X_h$ of continuous piecewise bilinears, this linear decay of the error $\|e-e_h\|_X$ as function of $h$ is generally the best that can be expected. In view of the order of the space $X_h$, one may hope that $\|e-e_h\|_{L_2(I \times \Omega)}$ is ${\mathcal O}(h^2)$, but on the basis of the smoothness demonstrated for $e$, even for $\inf_{\bar{e} \in X_h}\|e-\bar{e}\|_{L_2(I \times \Omega)}$ this cannot be shown.

\section{Interpolation for parametrized drift, boundaries, and final time}
\label{sec:parameter dependence}
In this section we consider the case that $v$ and $T$ in \eqref{R1} depend on a number of parameters $(\rho_1,\ldots,\rho_N) \in [-1,1]^N$, and that one is interested in the solution $u(v)$ to~\eqref{R1} for multiple values of these parameters. As 
explained in Section~\ref{sec:reganalysis}, in order to find $u(v)$ it suffices to obtain the solution $e(v)$ to~\eqref{R3}. Instead of simply solving $e(v)$ for each of the desired parameter values, under the provision that $v$ and $T$ depend smoothly on the parameters, one may attempt to \emph{interpolate} $e(v)$ from its a priori computed approximations for a carefully selected set of parameters in $[-1,1]^N$.

In order to be able to do so, first of all we need to get rid of the parameter dependence of the domain $I \times \Omega=(0,T) \times (0,1)$.
With $\hat{I}:=(0,1)$, the function $\hat{u}$ on $\hat{I}\times \Omega$ defined by 
$\hat{u}(t,x):=u(t T,x)$ solves 
\begin{align}\label{eq:hat u}
\left\{\begin{alignedat}{2}
\partial_t \hat{u}(t, x)&=T[\partial^2_x \hat{u}(t, x) +\hat{v}(t,x) \partial_x \hat{u}(t, x)]&& \quad(t, x) \in \hat{I} \times D,\\
\hat{u}(t,0)&=1, \quad \hat{u}(t,1)=0 &&\quad t \in \hat{I},\\
\hat{u}(0, x)&=0&&\quad   x \in D,
\end{alignedat}
\right.
\end{align}
where analogously $\hat{v}(t,x):=v(t T,x)$. Denoting this $\hat{u}$ as $\hat{u}(\hat{v},T)$, the difference
 $$
 \hat{e}=\hat{e}(\hat{v},T):=\hat{u}(\hat{v},T)-\hat{u}(v_{0},T)\colon (t,x) \mapsto e(tT,x)
 $$
 solves 
\begin{align}\label{eq:final system}
\left\{
\begin{alignedat}{2}
\partial_t \hat{e}(t, x)&= T[\partial^2_x \hat{e}(t, x) 
+ \hat{v}(t,x) \partial_x \hat{e}(t, x)] \\
&\quad+T(\hat{v}(t,x)-v_{0}) \partial_x \hat{u}(v_{0},T)&& \quad(t, x) \in \hat{I} \times \Omega,\\
\hat{e}(t,0)&=0, \quad \hat{e}(t,1)=0 &&\quad t \in \hat{I},\\
\hat{e}(0, x)&=0&&\quad   x \in \Omega.
\end{alignedat}
\right.
\end{align}
By simply replacing $I=(0,T)$ by $\hat{I}=(0,1)$ and in particular $X$ as well as $Y$ by
$$
\hat X:=L_2(\hat I;H^1_0(\Omega)) \cap H^1(\hat I;H^{-1}(\Omega)), \quad \hat Y:=L_2(\hat I;H^1_0(\Omega)),
$$
in a number of places, it is clear that the results that we obtained about the smoothness of $e$ and its numerical approximation $e_h$ by the minimal residual method apply equally well to $\hat{e}$ and its minimal residual approximation that we denote as $\hat{e}_h$.

Since the domain of $\hat{e}$ is independent of parameters, we can apply the idea of interpolation. 
One option is to perform a `full' tensor product interpolation. In this case, the number of \res{interpolation} points required for a fixed polynomial degree, i.e., the number of values of the parameters for which a numerical approximation for $\hat{e} \in \hat{X}$ has to be computed, grows exponentially with the number $N$ of parameters. As this is undesirable, we instead apply a sparse tensor product interpolation. More specifically, we choose the Smolyak construction, based on Clenshaw--Curtis abscissae in each parameter direction, see \cite{239.18}: For $i\in\N$ let $I_{i+1}$ denote the univariate interpolation operator with abscissae $\cos j 2^{-i} \pi $, $j=0,\ldots,2^i$, onto the space of polynomials of degree $2^i$, let \res{$I_1$} be the interpolation operator with abscissa $0$ and let $I_{0}:=0$. Then, for an integer $q\ge N$, we apply the sparse interpolator
$$
{\mathcal I}_q:=\sum_{\{{\bf i} \in \N_0^N\colon \sum_{n=1}^N i_n \leq q\}} \bigotimes_{n=1}^N (I_{i_n}-I_{i_n-1}).
$$
It is known that the resulting interpolation error in $C([-1,1]^N;\hat{X})$ (for arbitrary Banach space $\hat X$), equipped with $\|\cdot\|_{L_\infty([-1,1]^N;\hat{X})}$,  decays subexponentially in the number of interpolation points when $\hat{e}$ as function of each of the parameters $\rho_n$ has an extension to a differentiable mapping on a neighbourhood $\Sigma$ of $[-1,1]$ in $\C$. For details about this statement we refer to \cite[Thm.~3.11]{239.18}.
\cite{239.18} also mentions that the result requires relatively large values of $q$. 
Thus, the authors additionally prove  algebraic convergence under the same assumptions but for arbitrary $q$~\cite[Thm.~3.10]{239.18}.

Instead of $\hat{e}$, we interpolate a numerical approximation $\hat{e}_h$, specifically the one obtained by the minimal residual method described in Section~\ref{SMRM}. 
For the additional error we have
$$
\|{\mathcal I}_q(\hat{e}-\hat{e}_h)\|_{L_\infty([-1,1]^N;\hat{X})} \leq
\|{\mathcal I}_q \|_{\cL(C([-1,1]^N),C([-1,1]^N))} \|\hat{e}-\hat{e}_h\|_{L_\infty([-1,1]^N;\hat{X})}.
$$
In \cite[Sect.~5.3]{35.945} it has been shown that the factor $\|{\mathcal I}_q\|_{\cL(C([-1,1]^N),C([-1,1]^N))}$, known as the Lebesgue constant, is bounded by $(\# \{{\bf i} \in \N_0^N\colon \sum_{n=1}^N i_n \leq q\})^2$, which is only of polylogarithmic  order as function of the number of interpolation points.

Concerning the factor $\|\hat{e}-\hat{e}_h\|_{L_\infty([-1,1]^N;\hat{X})}$, in our derivation of Theorem~\ref{thm:optimal convergence} we have seen that for each parameter value $(\rho_1,\ldots,\rho_N) \in [-1,1]^N$ the expression $h^{-1}\|\hat{e}-\hat{e}_h\|_{\hat{X}}$ can be bounded by a constant multiple, only dependent on an upper bound for $\|\hat{v}\|_{L_\infty(\hat{I} \times \Omega)}$ and for the norm of $\hat{e}$ in $H^1_{0,\{0\}}(\hat I;H^1_0(\Omega))\cap H^2(\hat I;H^{-1}(\Omega)) \cap L_2(\hat I;H^2(\Omega))$. 
For uniformly bounded $T$ and $T^{-1}$, and $\hat{v}$ that varies over a bounded set in $W^1_\infty(\hat{I} \times \Omega) \cap L_\infty(\hat{I};W^2_\infty(\Omega))$, inspection of the estimates from Sect.~\ref{sec:reganalysis} 
reveals that the latter norm of $\hat{e}$ is uniformly bounded. So assuming that these conditions on $T$, $T^{-1}$ and $v$ hold true for 
$(\rho_1,\ldots,\rho_N) \in [-1,1]^N$, we have that
$\|\hat{e}-\hat{e}_h\|_{L_\infty([-1,1]^N;\hat{X})} \lesssim h$.

What remains is to establish the differentiability of the solution $\hat{e}$ as function of each of the parameters which is done in the following theorem.


\begin{theorem}\label{thm:analytic_in_para} For an open $[-1,1] \subset \Sigma \subset \C$, let $(\hat{v},T)\colon \Sigma \rightarrow C(\overline{\hat{I}};W^1_\infty(\Omega)) \times (0,\infty)$ be differentiable. For $\rho\in \Sigma$ let $\hat{e}(\hat{v}(\rho),T(\rho))\in \hat{X}$ be the solution to~\eqref{eq:final system}. Then $\rho \mapsto \hat{e}=\hat{e}(\hat{v}(\rho),T(\rho))\colon \Sigma \rightarrow \hat{X}$ is differentiable.
\end{theorem}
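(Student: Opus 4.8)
The plan is to represent $\hat{e}$ via the bounded solution operator of the parabolic problem and to exploit that inversion of boundedly invertible operators is itself a differentiable (indeed analytic) operation. Set $Z:=\hat{Y}'\times L_2(\Omega)$ and, for $\rho\in\Sigma$, define $G(\rho):=(\hat{B}(\rho),\gamma_0)$, where
\[
(\hat{B}(\rho)\,w)(z):=\int_{\hat I}\!\int_\Omega \partial_t w\,z+T(\rho)\bigl(\partial_x w\,\partial_x z-\hat{v}(\rho)\,\partial_x w\,z\bigr)\,dx\,dt .
\]
By the scaled analogue of $(B,\gamma_0)\in\Lis(X,Y'\times L_2(\Omega))$ noted in Section~\ref{SMRM}, one has $G(\rho)\in\Lis(\hat X,Z)$ and the solution of~\eqref{eq:final system} is $\hat{e}(\rho)=G(\rho)^{-1}F(\rho)$, with $F(\rho):=\bigl(T(\rho)(\hat{v}(\rho)-v_0(\rho))\,\partial_x\hat{u}(v_0(\rho),T(\rho)),\,0\bigr)\in Z$ and $v_0(\rho):=\hat{v}(\rho)(0,0)$. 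Since $A\mapsto A^{-1}$ is differentiable on the open set $\Lis(\hat X,Z)$ with derivative $H\mapsto -A^{-1}HA^{-1}$, the product rule reduces the theorem to the differentiability of $\rho\mapsto G(\rho)\in\cL(\hat X,Z)$ and of $\rho\mapsto F(\rho)\in Z$.

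Differentiability of $G$ is immediate: $\partial_t$ and $\gamma_0$ do not depend on $\rho$, while $\rho\mapsto T(\rho)$ and $\rho\mapsto(T\hat{v})(\rho)$ are differentiable into $\C$, respectively $C(\overline{\hat I};W^1_\infty(\Omega))$, and multiplication by an $L_\infty$-drift is a bounded bilinear operation into $\hat Y'$; thus $G$ is a fixed polynomial expression in differentiable data with values in bounded operators. For $F$, evaluation at $(0,0)$ is a bounded functional on $C(\overline{\hat I};W^1_\infty(\Omega))$, so $\rho\mapsto v_0(\rho)$ and $\rho\mapsto T(\rho)(\hat{v}(\rho)-v_0(\rho))$ are differentiable into $\C$ and $C(\overline{\hat I};W^1_\infty(\Omega))$. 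As multiplication $C(\overline{\hat I};W^1_\infty(\Omega))\times L_2(\hat I\times\Omega)\to L_2(\hat I\times\Omega)\hookrightarrow\hat Y'$ is bounded and bilinear, everything comes down to the differentiability of $\rho\mapsto\partial_x\hat{u}(v_0(\rho),T(\rho))\in L_2(\hat I\times\Omega)$.

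To handle this constant-drift term I would mirror the regularity analysis of Section~\ref{sec:reganalysis}. Write $\hat{u}(v_0,T)=\hat{u}_H^{(T)}+\bigl(\hat{u}(0,T)-\hat{u}_H^{(T)}\bigr)+\bigl(\hat{u}(v_0,T)-\hat{u}(0,T)\bigr)$, where $\hat{u}_H^{(T)}(t,x):=u_H(Tt,x)$ solves $\partial_t\hat{u}_H^{(T)}=T\partial_x^2\hat{u}_H^{(T)}$ and carries the boundary value $1$ at $x=0$ and the initial value $0$. After correcting the boundary at $x=1$ by the smooth term $x\,\hat{u}_H^{(T)}(t,1)$ as in Lemma~\ref{corol2}, the two differences have homogeneous initial and boundary data and solve problems of the form~\eqref{eq:final system} with the \emph{smooth} forcings $x\,\partial_t\hat{u}_H^{(T)}(t,1)$ and $-v_0T\,\partial_x\hat{u}(0,T)$ (compare Lemmas~\ref{corol2} and~\ref{corol3}). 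Their $\rho$-differentiability follows by re-applying the solution-operator representation of the first paragraph, once these forcings are seen to depend differentiably on $(v_0,T)$; this in turn reduces, through $\partial_x\hat{u}(0,T)=\partial_x\hat{u}_H^{(T)}+\partial_x\bigl(\hat{u}(0,T)-\hat{u}_H^{(T)}\bigr)$, to the single explicit term $\partial_x\hat{u}_H^{(T)}(t,x)=-\tfrac{1}{\sqrt{\pi Tt}}\,e^{-x^2/(4Tt)}$.

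The main obstacle is therefore this explicit singular contribution: I must show that $T\mapsto\partial_x\hat{u}_H^{(T)}\in L_2(\hat I\times\Omega)$ is complex-differentiable on a neighbourhood of the relevant values of $T$ (where $\mathrm{Re}\,T>0$, so the Gaussian is well defined). Pointwise in $(t,x)$ the integrand is holomorphic in $T$, so the candidate derivative is obtained by differentiating under the integral sign; the genuine work is to prove convergence of the $L_2$ difference quotients, which I would do by the substitution and growth estimates of Lemma~\ref{lemmie3} to produce a $T$-uniform integrable majorant on a small complex disc around each parameter value. Combining the differentiability of the three pieces gives that of $F$, and hence that of $\hat{e}=G^{-1}F$, completing the proof.
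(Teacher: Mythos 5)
Your proposal follows the paper's strategy for the first two reductions: represent $\hat{e}$ through the boundedly invertible solution operator, use that inversion is differentiable on $\Lis(\hat X,Z)$ (the paper implements this via the identity $L^{-1}=L_0^{-1}+L_0^{-1}[L_0-L]L_0^{-1}+L^{-1}\{[L_0-L]L_0^{-1}\}^2$ rather than quoting the differentiability of $A\mapsto A^{-1}$, but this is the same computation), and thereby reduce everything to the differentiability of the forcing term, hence of the constant-drift solution $\rho\mapsto\hat{u}(v_0(\rho),T(\rho))$. Where you genuinely diverge is in the last and hardest step, the $T$-dependence of $\hat{u}(0,T)$. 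You introduce the heat-kernel profile $\hat{u}_H^{(T)}(t,x)=u_H(Tt,x)$, peel off the two regular corrections as in Lemmas~\ref{corol2} and~\ref{corol3}, and propose to verify complex differentiability of $T\mapsto\partial_x\hat{u}_H^{(T)}=-\tfrac{1}{\sqrt{\pi Tt}}e^{-x^2/(4Tt)}$ in $L_2(\hat I\times\Omega)$ by dominated convergence of difference quotients. This is workable (the formal $T$-derivative consists of terms $t^{-1/2}e^{-cx^2/t}$ and $t^{-3/2}x^2e^{-cx^2/t}$, both in $L_2$ by Lemma~\ref{lemmie3}, and on a small disc around a real $T>0$ one has $\mathrm{Re}(1/T)$ bounded below so a uniform majorant exists), but it is precisely the part you leave as a sketch, and it requires some care with the branch of $\sqrt{T}$ and with making Lemma~\ref{lemmie3} uniform over complex $T$. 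The paper avoids this computation entirely: it observes that $w:=\hat{u}(0,T(\rho))-\hat{u}(0,T(\bar\rho))$ solves the homogeneous problem with forcing a scalar multiple $(T(\rho)-T(\bar\rho))$ of the \emph{fixed} element $\partial_x^2\hat{u}(0,T(\bar\rho))\in\hat Y'$, so that $\rho\mapsto\hat{u}(0,T(\rho))$ inherits differentiability directly from the already-established differentiability of $\rho\mapsto L(0,T(\rho))^{-1}\in\cL(\hat Y',\hat X)$, with no reference to the explicit kernel. Your route buys an explicit formula for the derivative in $T$ of the singular part at the cost of the majorant estimates; the paper's identity is shorter and recycles machinery you have already built, so if you complete your write-up I would recommend replacing your fourth paragraph by that one-line PDE identity.
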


\begin{proof} The proof is based on the fact that $\hat{e}$ is the solution of a well-posed PDE with coefficients and a forcing term that are differentiable functions of $\rho$.

Analogously to \eqref{defL}, denoting by $L(\hat{v},T)$ the map $w \mapsto f$ defined by $\partial_t w=T(\partial_x^2+\hat{v} \partial_x) w+f$ on $\hat{I} \times \Omega$, $w(t,0)=0=w(t,1)$ ($t \in \hat{I}$), and $w(0,x)=0$ ($x \in \Omega$), one has
\begin{equation}\label{eq:hateexp}
\hat{e}(\hat{v}(\rho),T(\rho))=L(\hat{v}(\rho),T(\rho))^{-1} T(\rho)(\hat{v}(\rho)-v_{0}(\rho))\partial_x \hat{u}(v_{0}(\rho),T(\rho)),
\end{equation}
where $v_{0}(\rho):=\hat{v}(\rho)(0,0)$. 
Below we demonstrate that
\
\begin{align} \label{e1}
&\rho \mapsto T(\rho)(\hat{v}(\rho)-v_{0}(\rho))\colon \Sigma \rightarrow L_\infty(\hat{I};W_\infty^1(\Omega)) \text{ is differentiable,}\\  \label{e2}
&\rho \mapsto \hat{u}(v_{0}(\rho),T(\rho))\colon \Sigma  \rightarrow L_2(\hat{I} \times \Omega) \text{ is differentiable,}
\intertext{so that, from $\partial_x \in \cL(L_2(\hat{I}\times \Omega),\hat{Y}')$ and $L_\infty(\hat{I};W_\infty^1(\Omega))$-functions being pointwise multipliers in $\cL(\hat{Y}',\hat{Y}')$,}
  \label{e33}
& \rho \mapsto T(\rho) (\hat{v}(\rho)-v_{0}(\rho))\partial_x \hat{u}(v_{0}(\rho),T(\rho))\colon  \Sigma  \rightarrow \hat{Y}'  \text{ is differentiable.}
\intertext{We proceed below to show that}  \label{e4}
&\rho \mapsto L(\hat{v}(\rho),T(\rho))^{-1}\colon  \Sigma  \rightarrow \cL(\hat{Y}',\hat{X}) \text{ is differentiable.}
\end{align}
Together, \eqref{e33} and  \eqref{e4} complete the proof.

From $\rho \mapsto \hat{v}(\rho) \colon \Sigma \rightarrow C(\overline{\res{\hat{I}}};W^1_\infty(\Omega))$ being differentiable, it follows that 
$\rho \mapsto v_{0}(\rho) \colon \Sigma \rightarrow \C$  is differentiable, which together with $T\colon \Sigma \rightarrow (0,\infty)$ being differentiable shows \eqref{e1}.

To show \eqref{e4}, we fix some arbitrary $\rho_0\in\Sigma$, abbreviate $L:=L(\hat{v}(\rho),T(\rho))$ as well as $L_0:=L(\hat{v}(\rho_0),T(\rho_0))$ and write
\begin{align*}
L^{-1}=L_0^{-1}+L_0^{-1}[L_0-L] L_0^{-1}+L^{-1}\{[L_0-L] L_0^{-1}\}^2.
\end{align*}
This decomposition and the fact that $L(\hat{v}(\rho),T(\rho))^{-1}$ is bounded in $\cL(\hat{Y}',\hat{X})$ for $\rho$ in a neighbourhood of $\rho_0$ (\cite[Thm.~5.1]{247.15}) imply that
it suffices to show that for some $K(\rho_0) \in \cL(\C,\cL(\hat{X},\hat{Y}'))$, 
\be \label{intermediate}
L(\hat{v}(\rho_0),T(\rho_0))-L(\hat{v}(\rho),T(\rho))=K(\rho_0) (\rho-\rho_0)+o(\rho-\rho_0) \text{ in }\cL(\hat{X},\hat{Y}').
\ee

We have
\begin{align*}
&L(\hat{v}(\rho_0),T(\rho_0))-L(\hat{v}(\rho),T(\rho))\\
&=[T(\rho)-T(\rho_0)]\partial_x^2
+ [(T(\rho)-T(\rho_0))\hat{v}(\rho)+T(\rho_0)(\hat{v}(\rho)-\hat{v}(\rho_0))]\partial_x.
\end{align*}
From $T(\rho)-T(\rho_0)=\res{DT}(\rho_0)(\rho-\rho_0)+o(\rho-\rho_0)$, 
$\hat{v}(\rho)-\hat{v}(\rho_0)=\res{D\hat{v}}(\rho_0)(\rho-\rho_0)+o(\rho-\rho_0)$ in $C(\overline{I}_1,W_\infty^1(\Omega)) \hookrightarrow L_\infty(\hat{I} \times \Omega)$,
$\partial_x^2 \in \cL(\hat{X},\hat{Y}')$, 
$\partial_x \in \cL(\hat{X},L_2(\hat{I} \times \Omega))$,
 $L_\infty(\hat{I} \times \Omega)$-functions being pointwise multipliers in $\cL(L_2(\hat{I} \times \Omega),L_2(\hat{I} \times \Omega))$,
and $L_2(\hat{I} \times \Omega) \hookrightarrow \hat{Y}'$,
one concludes \eqref{intermediate}, and so \eqref{e4}.

To show \eqref{e2}, i.e., differentiability of $\rho \mapsto \hat{u}(v_{0}(\rho),T(\rho))$, we repeat the argument that led to~\eqref{eq:hateexp} to obtain
\begin{align*}
\hat{u}(v_{0}(\rho),T(\rho))&=\hat{u}(0,T(\rho))+\hat{u}(v_{0}(\rho),T(\rho))-\hat{u}(0,T(\rho))\\
&=\hat{u}(0,T(\rho))+T(\rho)v_{0}(\rho) L(v_{0}(\rho),T(\rho))^{-1}  \partial_x \hat{u}(0,T(\rho)),
\end{align*}
and show that
\be
\label{50}
\rho \mapsto \hat{u}(0,T(\rho))\colon \Sigma \mapsto L_2(\hat{I} \times \Omega) \text{ is differentiable.}
\ee
Then $\rho \mapsto \partial_x\hat{u}(0,T(\rho))\colon \Sigma \mapsto \hat{Y}'$ is differentiable, and from both
$\rho \mapsto T(\rho)v_{0}(\rho)\colon$ $\Sigma \rightarrow \C$ 
and $\rho \mapsto L(v_{0}(\rho),T(\rho))^{-1}\colon  \Sigma  \rightarrow \cL(\hat{Y}',\hat{X})$ being differentiable one infers \eqref{e2}.

To show \eqref{50}, we apply our approach for the third time.
Picking some $\bar{\rho} \in \Sigma$, we write
$$
\hat{u}(0,T(\rho))=\hat{u}(0,T(\bar{\rho} ))+(T(\bar{\rho})-T(\rho))L(0,T(\rho))^{-1} \partial_x^2 \hat{u}(0,T(\bar{\rho} )).
$$
Knowing that $\partial_x^2 \hat{u}(0,T(\bar{\rho} )) \in \hat{Y}'$, and 
$\rho \mapsto L(0,T(\rho))^{-1}\colon  \Sigma  \rightarrow \cL(\hat{Y}',\hat{X})$
and $\rho \mapsto T(\rho) \colon  \Sigma  \rightarrow \C$ are differentiable, the proof of \eqref{50} and thus of the theorem is completed.
\end{proof}

\section{Numerical results}\label{sec:examples}
We consider three relevant examples of the form \eqref{eq:FPintro} (or its equivalent reformulation~\eqref{R0}) with $\sigma=1$ from the literature. 
We transform the solution $\tilde{u}$ of \eqref{R0}, which might live on a time-dependent spatial domain, to $u$, which satisfies~\eqref{R1} on the domain $(0,T) \times (0,1)$. 
In each example the resulting drift function $v$ as well as the end time point $T$ depend on an up to $N=5$ dimensional parameter ${\bm \rho} \in [-1,1]^N$.


As $u(v({\bm \rho})(0,0),T({\bm \rho}))$ can be computed efficiently as a truncated series, it suffices to consider the difference 
$$e(v({\bm \rho}),T({\bm \rho}))=u(v({\bm \rho}),T({\bm \rho}))-u(v({\bm \rho})(0,0),T({\bm \rho})),$$ 
which satisfies equation~\eqref{R3} and is provably smoother than $u$ (Theorem~\ref{smoothness}).

Thinking of a multi-query setting, instead of approximating this difference for each individual parameter value of interest 
we want to use (sparse) interpolation in the parameter domain $[-1,1]^N$.
To that end, defining $\hat e(t,x):=e(tT({\bm\rho}),x)$, we get rid of the parameter-dependent domain $(0,T({\bm \rho}))\times \Omega$ on which $e$ lives. 
This function $\hat e(t,x)$ satisfies the parabolic problem equation~\eqref{eq:final system} on the space-time domain $\hat{I}\times \Omega=(0,1)^2$ with forcing term
\begin{align*}
\bar w &\mapsto \int_0^1\int_D (\hat v(t,x) - v_{0})\partial_x \hat u(v_{0})(t,x) \bar w(t,x) \,dx\,dt\\
&=\int_0^1\int_D \hat u(v_{0})(t,x)  \big(-\partial_x\hat v(t,x) \bar w(t,x)  - (\hat v(t,x)-v_0) \partial_x\bar w(t,x)\big)\,dx\,dt
\end{align*}
for all $\bar w\in\hat X=L_2(\hat I;H^1_0(\Omega)) \cap H^1(\hat I;H^{-1}(\Omega))$, and
$v_0:=v({\bm \rho})(0,0)$ and corresponding $\hat u(v_0)$ solving \eqref{eq:hat u} with $\hat v = v_0$.

For all sparse interpolation points, by applying the minimal residual method from Section~\ref{SMRM} 
we approximate $\hat e$ by the continuous piecewise affine function $\hat e_h$ on a uniform tensor mesh with mesh-size $h$, where  $\hat u(v_{0})$ inside the forcing term can be efficiently approximated at high accuracy as a truncated series.  

Finally, for all parameter values $\bm \rho$ of interest, we apply the sparse tensor product interpolation analyzed in Section~\ref{sec:parameter dependence} giving rise to an overall error 
\begin{align*}
\|\hat e - \mathcal{I}_q\hat e_h\|_{\hat X} \le \|\hat e - \hat e_h\|_{\hat X} + \|\hat e_h - \mathcal{I}_q \hat e_h\|_{\hat X} 
\approx \|\hat e_{h/2} - \hat e_h\|_{\hat X} + \|\hat e_h - \mathcal{I}_q \hat e_h\|_{\hat X}
\end{align*}
with $q$ the parameter that steers the accuracy of the sparse interpolation.
For each of the considered three examples, we compute the latter two errors for different $h$ and $q$ and parameter test set
\be \label{test-set}
\bm{\rho} \in \{-1,-0.5,0.5,1\}^N.
\ee

By Theorem~\ref{thm:optimal convergence}, we expect $\|\hat e_{h/2} - \hat e_h\|_{\hat X} = \mathcal{O}(h)$ for the first term. 
Section~\ref{sec:parameter dependence} suggests subexponential convergence of the second term $ \|\hat e_h - \mathcal{I}_q \hat e_h\|_{\hat X}$ as function of the number of interpolation points (this was shown for $ \|\hat e - \mathcal{I}_q \hat e\|_{\hat X}$).
However, we already mentioned there that subexponential convergence is only observed for very high $q$ and in practice one should rather expect algebraic convergence. 

Notice that $\|\cdot\|_{\hat X}$ involves a negative order Sobolev norm. Thus, we compute an equivalent version of $\|\cdot\|_{\hat X}$ for functions in the discrete trial space $\bar w\in\hat X_h\subset \hat X$ (similarly for $\bar w\in\hat X_{h/2}$) (see~\cite[Proof of Thm.~3.1]{249.992}) 
\begin{align}\label{eq:equivalent norm}
\| \bar w \|_{\hat X}^2 \eqsim (\bar{\bm{w}}^{h})^\top (\bm{B}^{h})^\top (\bm{A}^{h})^{-1} \bm{B}^{h} \bar{\bm{w}}^{h} + (\bar{\bm{w}}^{h})^\top \bm{C}^{h} \bar{\bm{w}}^{h}.
\end{align}
Here,  $\bar{\bm{w}}^{h}$ is the coefficient vector of $\bar w$ in the standard nodal basis $\Psi^{h}=\{\psi^h_i\}$, $\bm{B}^{h}$ and $\bm{C}^{h}$ are defined as in \eqref{eq:discrete system} with the standard nodal basis $\Phi^{h}=\{\phi^{h}_i\}$, and $\bm{A}^{h}_{i j}:=\int_I \int_\Omega \partial_x \phi^{h}_j(t,x)\partial_x \phi^{h}_i (t,x)\,d x\,dt$.

\subsection{Time-dependent hyperbolic drift function}\label{sec:hyperbolic}
As in \cite{Churchland2008,Hanks2014}, 
we consider 
$$
\mu(t,x):= \mu_0 + \mu_1 \frac{t}{t+t_0}
$$
from Section~\ref{sec:introduction} 
with parameters $\mu_0,\mu_1\in\R$ and $t_0>0$. 
The left and right boundary are given as 
$$
\alpha(t):=0\quad\text{and}\quad \beta(t):=\beta_0
$$ 
with parameter $\beta_0>0$.
Following  \cite{Churchland2008,Hanks2014}, we particularly consider the following practical ranges: $\mu_0 \in [-1.97,-1.64]$, $\mu_1\in[-2.31,-0.99]$, $t_0\in[0.13,0.40]$, $\beta_0\in[1.38,2.26]$, and $\tau\in[0.1,2.5]$ for the end-time point. 
We have $N=5$ different parameters on which $\tilde v$ and thus $v$ depend. 
After rescaling, the parameter space hence has the form $[-1,1]^5$.

In Figure~\ref{fig:b2_estimation}, we plot the maximal error $\hat e_{h/2} - \hat e_h\approx \hat e - \hat e_h$ measured in the (equivalent) $\hat X$-norm \eqref{eq:equivalent norm} over the test set~\eqref{test-set} for different values of $h$. 
Figure~\ref{fig:b2_interpolation}  depicts the maximal interpolation error $\hat{e}_h - \mathcal{I}_q\hat{e}_h$ over the test set~\eqref{test-set} for different values of $h$ and $q$.

\subsection{Space-dependent linear drift function}\label{sec:linear}
As in \cite{Smith2010}, we consider 
$$
\mu(t,x):= \mu_0 + \mu_1 (\beta_0 - x)
$$
from Section~\ref{sec:introduction} 
with parameters $\beta_0>0$  and $\mu_0, \mu_1 \in\R$. 
The left and right boundary are again given as 
$$
\alpha(t):=0\quad\text{and}\quad \beta(t):=\beta_0.
$$ 
Motivated by \cite{Matzke2009,Smith2010}, we particularly consider the following practical ranges: $\mu_0 \in[-2,2]$, $\mu_1\in[-4,4]$, and $\beta_0 \in[0.5,2]$, and choose the end-time point as $\tau:=2.5$. 
We have $N=3$ different parameters on which $\tilde v$ and thus $v$ depend.  
After rescaling, the parameter space hence has the form $[-1,1]^3$.

In Figure~\ref{fig:b3_estimation}, we plot the maximal error $\hat e_{h/2} - \hat e_h\approx\hat e - \hat e_h$ measured in the (equivalent) $\hat X$-norm \eqref{eq:equivalent norm} over the test set~\eqref{test-set}. 
Figure~\ref{fig:b3_interpolation}  depicts the maximal interpolation error $\hat{e}_h - \mathcal{I}_q\hat{e}_h$ 
over the test set~\eqref{test-set} for different values of $h$ and  $q$.

\subsection{Constant drift function and time-dependent linear spatial domain}\label{sec:moving}
We consider a constant drift function
$$
\mu(t,x):= \mu_0
$$
with parameter $\mu_0\in\R$. 
As in \cite{Evans2020} (see also Example~\ref{ex:linear boundaries}), we choose the left and right boundary as 
$$
\alpha(t):=\beta_0\frac{t}{2  T_0}\quad\text{and}\quad \beta(t):=\beta_0 \Big(1-\frac{t}{2 T_0}\Big)
$$ 
with parameters $\beta_0,T_0>0$. Recall from  Example~\ref{ex:linear boundaries} that
$$
  \theta(t) = \frac{ \beta_0^2(T_0-2\widetilde{T})^2 t }
 { T_0^2 - 2\beta_0^2(T_0-2\widetilde{T})t },\quad t\in [0,T) 
$$
 with $T=\theta^{-1}(\widetilde{T})=\frac{T_0 \widetilde{T}}{\beta_0^2 (T_0-2\widetilde{T})}$. 
Following \cite{Evans2020}, we particularly consider the following practical ranges: $\mu_0\in[-5.86,0]$, $\beta_0\in[0.56,3.93]$, $T_0 \in[3,20]$, and $\tau\in [0.1,2.5]$ for the end-time point. 
We have $N=4$ different parameters on which $\tilde v$ and thus $v$ depend. 
After rescaling, the parameter space hence has the form $[-1,1]^4$. 
\res{Figures~\ref{fig:e_hat},~\ref{fig:u_hat}, and~\ref{fig:u_til} show approximations of the solution $\hat{e}$ to~\eqref{eq:final system}, the solution $\hat{u}$ to~\eqref{eq:hat u}, and the solution $\tilde{u}$ to the original problem~\eqref{R0}, with parameter values $\mu_0 = 0$, $\beta_0 = 3.93$, $T_0 = 3$, and $\tau = 2.5$. }
In Figure~\ref{fig:b4_estimation}, we plot the maximal error $\hat e_{h/2} - \hat e_h\approx \hat e - \hat e_h$ measured in the (equivalent) $\hat X$-norm \eqref{eq:equivalent norm}  over the test set~\eqref{test-set}. 
Figure~\ref{fig:b4_interpolation}  depicts the maximal interpolation error $\hat{e}_h - \mathcal{I}_q\hat{e}_h$
 over the test set~\eqref{test-set} for different values of $h$ and $q$.
\begin{figure}[thp] 
\captionsetup{width=0.45\linewidth}
\begin{minipage}[t]{0.5\textwidth}
\begin{center}
\includegraphics[width=0.9\textwidth]{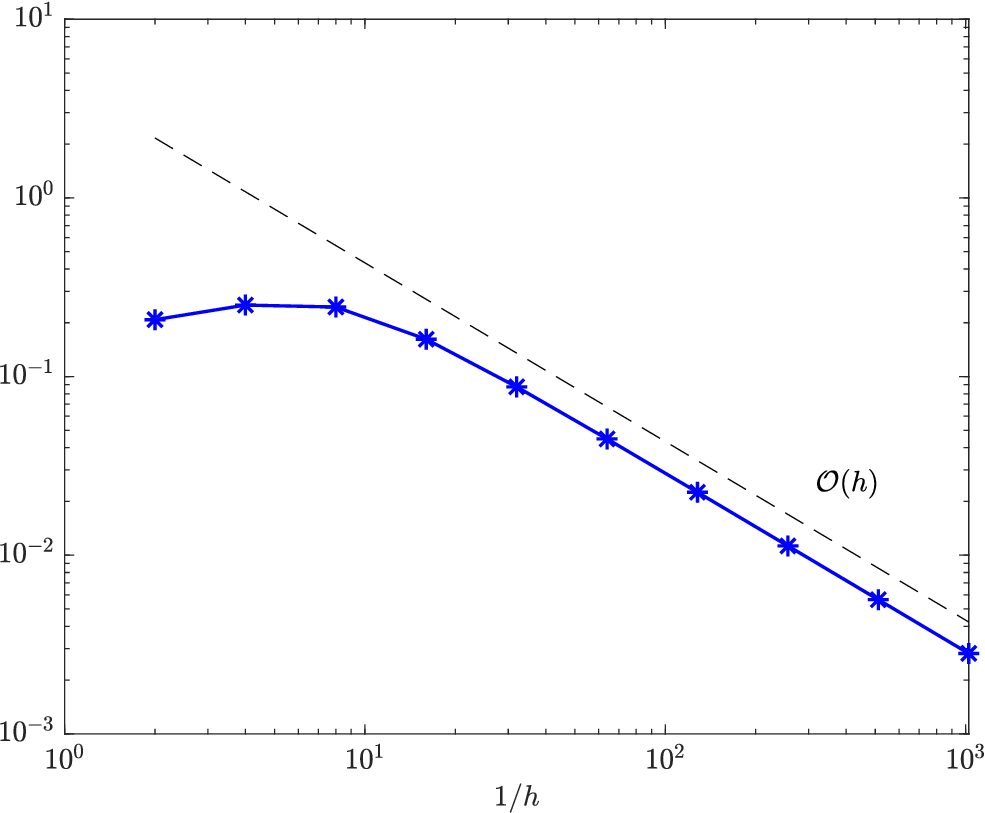}
\caption{\label{fig:b2_estimation}
Maximal error $\hat e_{h/2}(\hat v({\bm \rho})) - \hat e_h(\hat v({\bm \rho}))$ measured in (equivalent) $\hat X$-norm over all 
${\bm \rho}\in\{-1,-0.5,0,0.5,1\}^5$ for time-dependent hyperbolic drift function from Section~\ref{sec:hyperbolic}.
}
\end{center}
\end{minipage}%
\begin{minipage}[t]{0.5\textwidth}
\begin{center}
\includegraphics[width=0.9\textwidth]{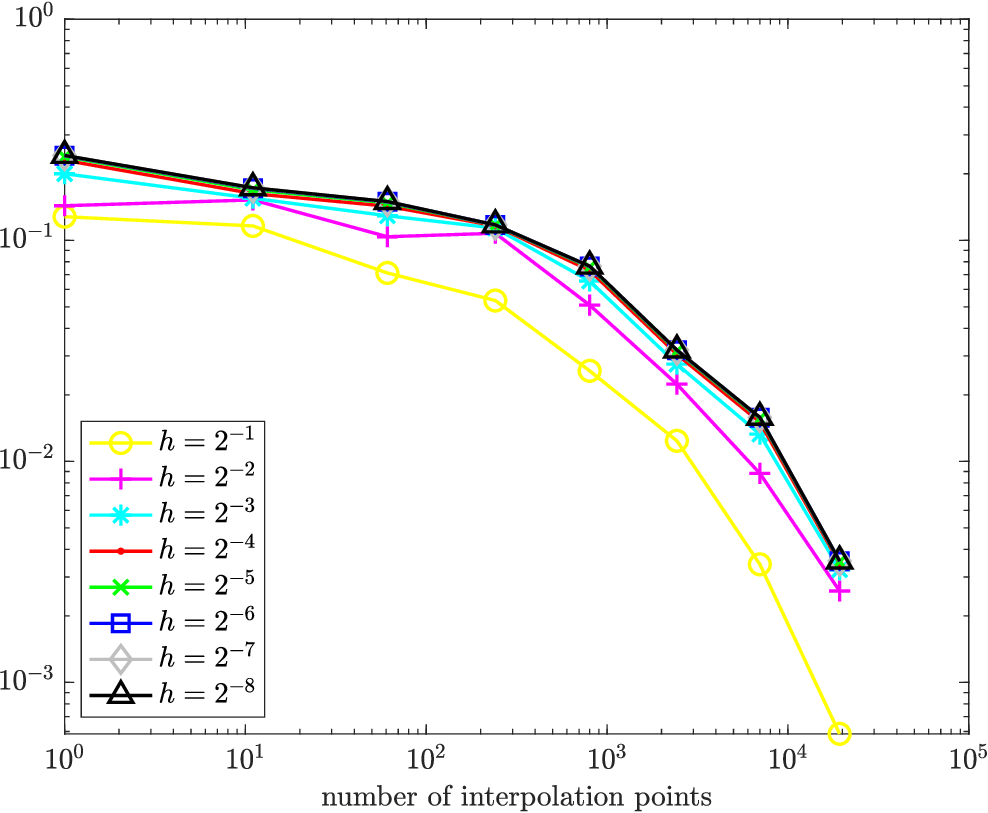}
\caption{\label{fig:b2_interpolation} 
Maximal interpolation error $\hat e_{h}(\hat v({\bm \rho})) - (\mathcal{I}_q\hat e_h(\hat v(\cdot)))({\bm \rho})$ 
for various choices of $h$
measured in (equivalent) $\hat X$-norm over all 
${\bm \rho}\in\{-1,-0.5,0,0.5,1\}^5$ for time-dependent hyperbolic drift function from Section~\ref{sec:hyperbolic}.
}
\end{center}
\end{minipage}
\end{figure}

\begin{figure}[thp] 
\captionsetup{width=0.45\linewidth}
\begin{minipage}[t]{0.5\textwidth}
\begin{center}
\includegraphics[width=0.9\textwidth]{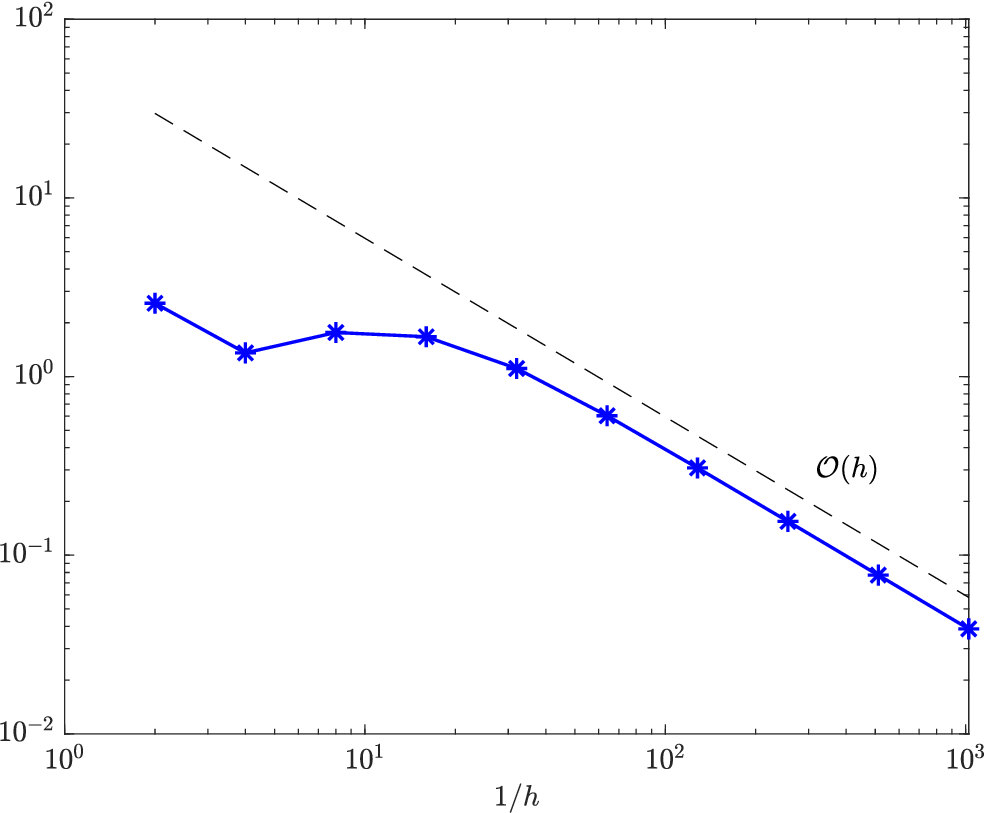}
\caption{\label{fig:b3_estimation}
Maximal error $\hat e_{h/2}(\hat v({\bm \rho})) - \hat e_h(\hat v({\bm \rho}))$ measured in (equivalent) $\hat X$-norm over all 
${\bm \rho}\in\{-1,-0.5,0,0.5,1\}^3$ for space-dependent linear drift function from Section~\ref{sec:linear}.
}
\end{center}
\end{minipage}%
\begin{minipage}[t]{0.5\textwidth}
\begin{center}
\includegraphics[width=0.9\textwidth]{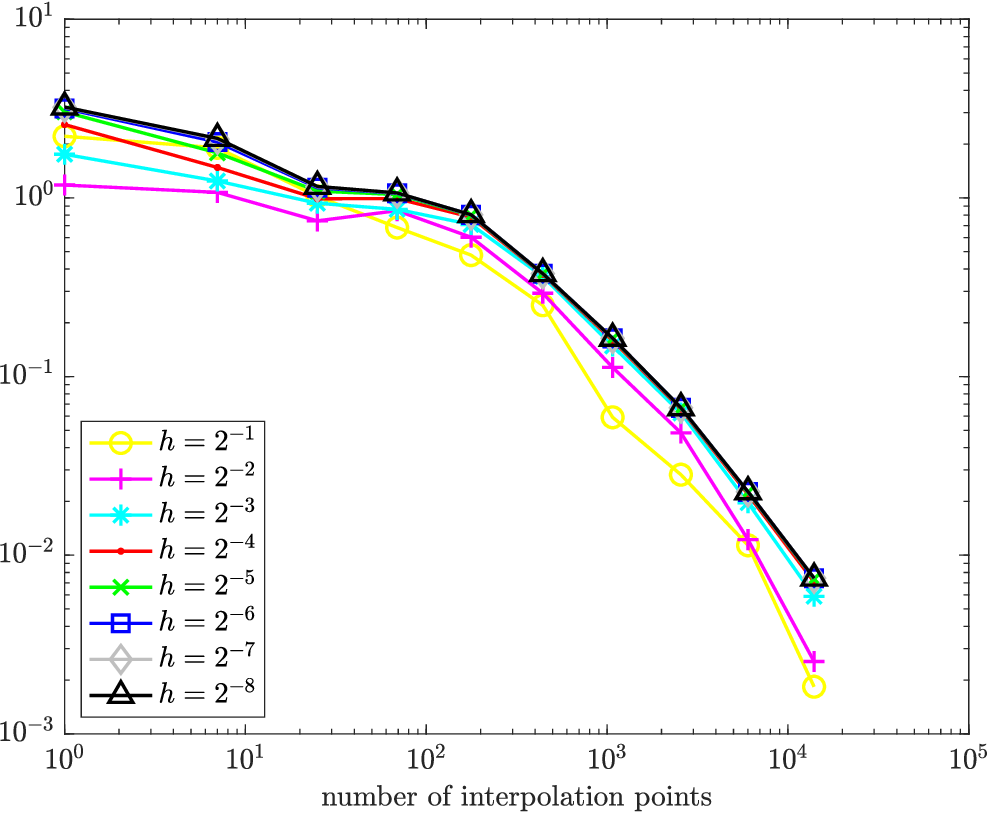}
\caption{\label{fig:b3_interpolation}
Maximal interpolation error $\hat e_{h}(\hat v({\bm \rho})) - (\mathcal{I}_q\hat e_h(\hat v(\cdot)))({\bm \rho})$ with $h$ $=$ $2^{-1},\dots,2^{-8}$ measured in (equivalent) $\hat X$-norm over all 
${\bm \rho}\in\{-1,-0.5,0,0.5,1\}^3$ for space-dependent linear drift function from Section~\ref{sec:linear}.
}
\end{center}
\end{minipage}
\end{figure}
\begin{figure}[thp]
\captionsetup{width=0.45\linewidth}
\begin{minipage}[t]{0.5\textwidth}
\begin{center}\includegraphics[width=0.9\textwidth]{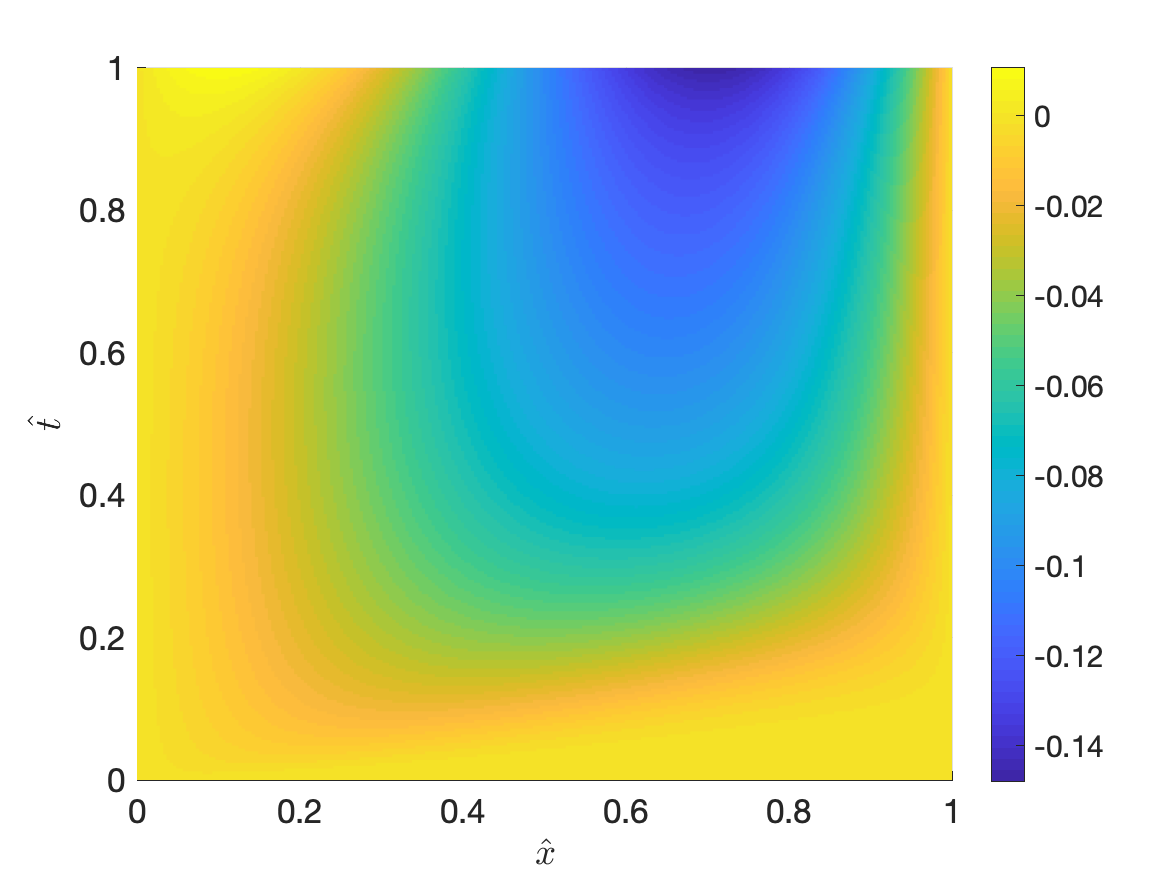}
\caption{\label{fig:e_hat} \res{An approximation of solution $\hat{e}$ to~\eqref{eq:final system} using the minimal residual method.}}
\end{center}
\end{minipage}
\begin{minipage}[t]{0.5\textwidth}
\begin{center}
\includegraphics[width=0.9\textwidth]{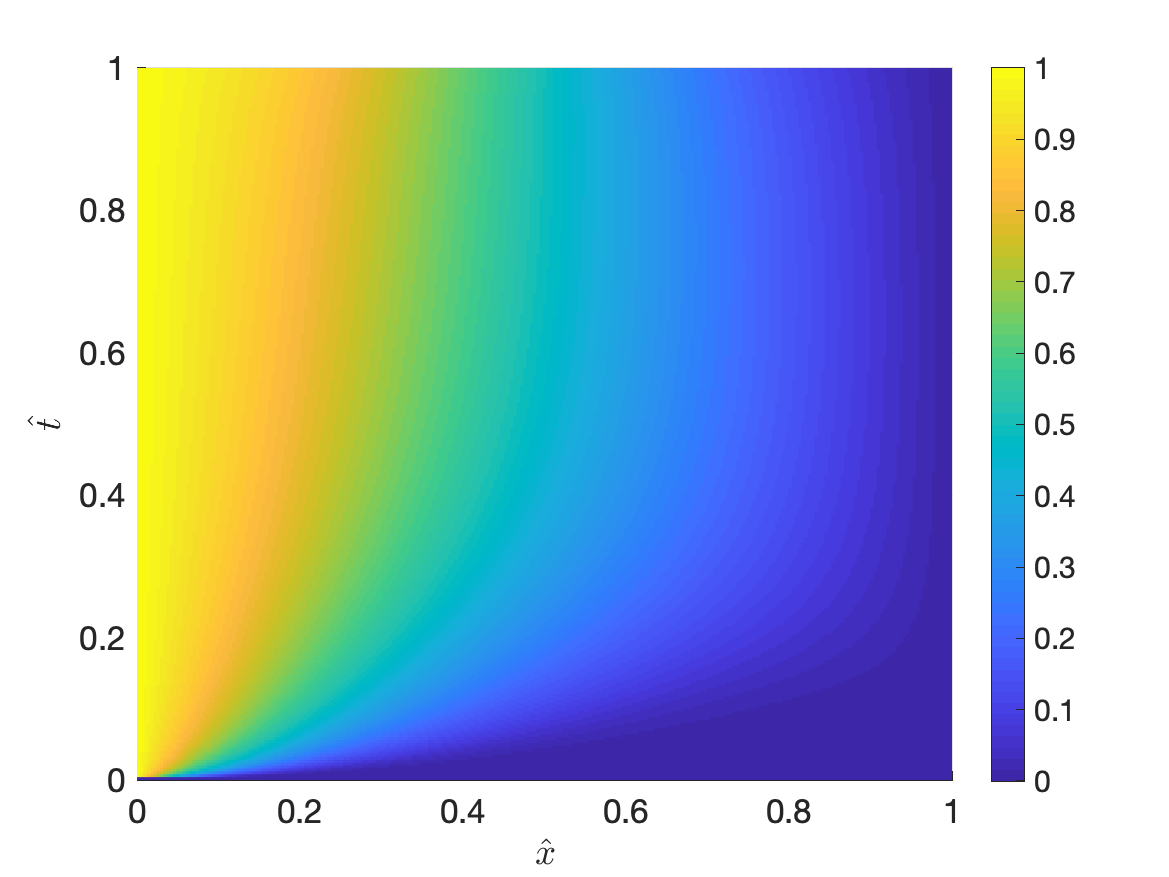}
\caption{\label{fig:u_hat} \res{An approximation of the solution $\hat{u}$ to~\eqref{eq:hat u} obtained by adding $\hat{u}(v_0,T)$ to $\hat{e}$.}}
\end{center}
\end{minipage}
\begin{center}
\begin{minipage}[t]{0.5\textwidth}
\includegraphics[width=0.9\textwidth]{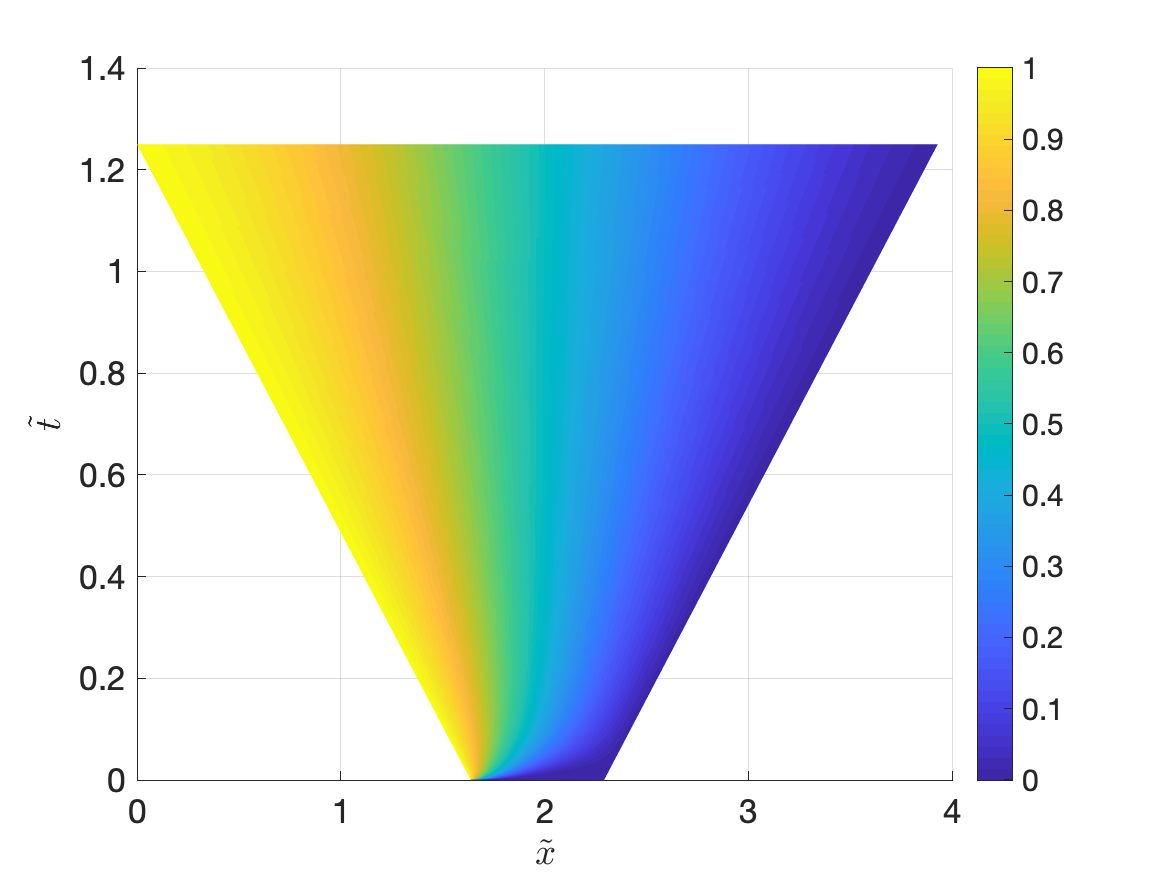}
\caption{\label{fig:u_til} \res{An approximation of the solution $\tilde{u}$ to~\eqref{R0} obtained by transforming the approximation of $\hat{u}$ back to the original domain.}}
\end{minipage}
\end{center}
\end{figure}


\begin{figure}[thp] 
\captionsetup{width=0.45\linewidth}
\begin{minipage}[t]{0.5\textwidth}
\begin{center} 
\includegraphics[width=0.9\textwidth]{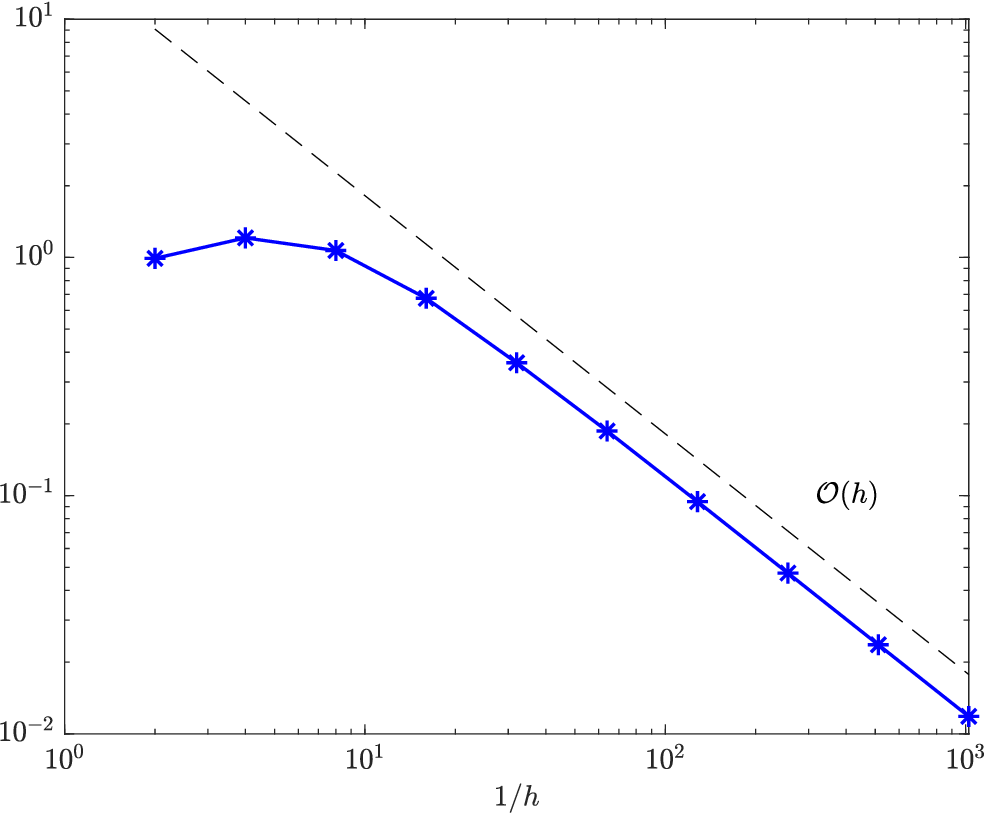}
\caption{\label{fig:b4_estimation}
Maximal error $\hat e_{h/2}(\hat v({\bm \rho}),T({\bm \rho})) - \hat e_h(\hat v({\bm \rho}),T({\bm \rho}))$ measured in (equivalent) $\hat X$-norm over all 
${\bm \rho}\in\{-1,-0.5,0,0.5,1\}^4$ for constant drift function with time-dependent linear spatial domain from Section~\ref{sec:moving}.
}
\end{center}
\end{minipage}%
\begin{minipage}[t]{0.5\textwidth}
\begin{center}
\includegraphics[width=0.9\textwidth]{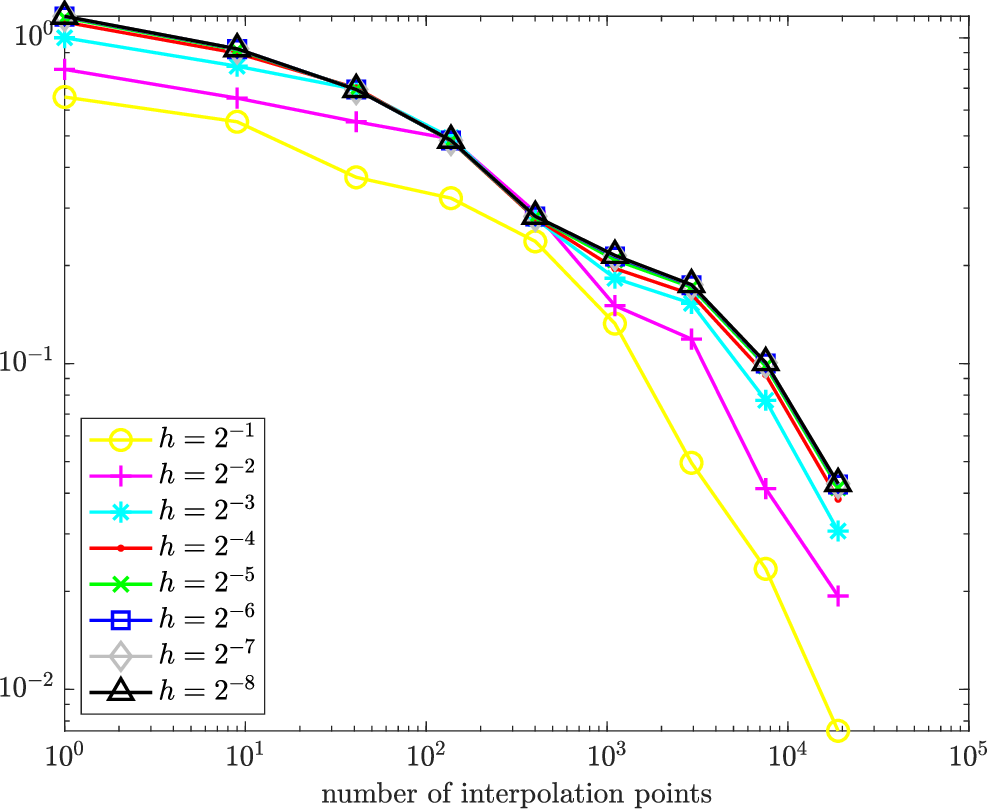}
\caption{\label{fig:b4_interpolation}
Maximal interpolation error $\hat e_{h}(\hat v({\bm \rho}), T({\bm \rho})) - (\mathcal{I}_q\hat e_h(\hat v(\cdot),T(\cdot)))({\bm \rho})$ for various choices of $h$
measured in (equivalent) $\hat X$-norm over all 
${\bm \rho}\in\{-1,-0.5,0,0.5,1\}^4$ for constant drift function with time-dependent linear spatial domain from  Section~\ref{sec:moving}.
}
\end{center}
\end{minipage}

\end{figure}

\section{Conclusion}
We have developed a numerical solution method for solving the Fokker--Planck equation on a one-dimensional spatial domain and with a discontinuity between initial and boundary data and time-dependent boundaries. We first transformed the equation to an equation on a rectangular time-space domain. We then demonstrated that the solution of a corresponding equation with a suitable constant drift function, whose solution is explicitly available as a fast converging series expansion, captures the main singularity present in the solution for a variable  drift function.
The equation for the difference of both these solutions, which is thus more regular than both terms, is solved with a minimal residual method. This method is known to give a quasi-best approximation from the selected trial space. 

Finally, in order to efficiently solve Fokker--Planck equations that depend on multiple parameters, we demonstrate that the solution is a holomorphic function of these parameters. Consequently, a sparse tensor product interpolation method can be shown to converge at a subexponentional rate as function of the number of interpolation points. In one test example, this interpolation method works very satisfactory, but the results are less convincing in two other cases. We envisage that in those cases better results can be obtained by an adaptive sparse interpolation method as the one proposed in \cite{35.946}.

\bibliographystyle{amsalpha}
\bibliography{ref}
\end{document}